\documentclass[12pt]{article}
\usepackage{graphicx}
\usepackage{amsmath,amsthm,amssymb,enumerate}
\usepackage{euscript,mathrsfs}
\usepackage{amsmath}
\usepackage{color}
\usepackage{dsfont}
\usepackage{mathrsfs}
\usepackage[left=2cm,right=2cm,top=3.5cm,bottom=3.5cm]{geometry}
\usepackage[framemethod=tikz]{mdframed}
\usepackage{bm}

\usepackage{comment}

\usepackage{soul}

\catcode`\@=11 \@addtoreset{equation}{section}

\catcode`\@=12

\newtheorem{Theorem}{Theorem}[section]
\newtheorem{Proposition}[Theorem]{Proposition}
\newtheorem{Lemma}[Theorem]{Lemma}
\newtheorem{Corollary}[Theorem]{Corollary}

\theoremstyle{definition}
\newtheorem{Definition}[Theorem]{Definition}

\newtheorem{Remark}[Theorem]{Remark}

\newcommand{\bTheorem}[1]{
\begin{Theorem} \label{T#1} }
\newcommand{\eT}{\end{Theorem}}

\newcommand{\bProposition}[1]{
\begin{Proposition} \label{P#1}}
\newcommand{\eP}{\end{Proposition}}

\newcommand{\bLemma}[1]{
\begin{Lemma} \label{L#1} }
\newcommand{\eL}{\end{Lemma}}

\newcommand{\bCorollary}[1]{
\begin{Corollary} \label{C#1} }
\newcommand{\eC}{\end{Corollary}}

\newcommand{\bRemark}[1]{
\begin{Remark} \label{R#1} }
\newcommand{\eR}{\end{Remark}}

\newcommand{\bDefinition}[1]{
\begin{Definition} \label{D#1} }
\newcommand{\eD}{\end{Definition}}

\newcommand{\Del}{\Delta_x}

\newcommand{\bfphi}{\boldsymbol{\varphi}}

\newcommand{\bFormula}[1]{
\begin{equation} \label{#1}}
\newcommand{\eF}{\end{equation}}

\newcommand{\Ov}[1]{\overline{#1}}

\newcommand{\aleq}{\stackrel{<}{\sim}}

\newcommand{\ageq}{\stackrel{>}{\sim}}

\newcommand{\vr}{\varrho}

\newcommand{\tvt}{\tilde \vt}

\newcommand{\vt}{\vartheta}
\newcommand{\vu}{\vc{u}}

\newcommand{\vc}[1]{{\bm #1}}

\newcommand{\Div}{{\rm div}_x}
\newcommand{\Grad}{\nabla_x}

\newcommand{\dx}{\,{\rm d} {x}}

\newcommand{\dt}{\,{\rm d} t }

\newcommand{\intR}[1]{\int_{R^3} #1 \ \dx}

\newcommand{\D}{{\rm d}}

\newcommand{\br}{ \nonumber \\ }

\def\softd{{\leavevmode\setbox1=\hbox{d}%
	\hbox to 1.05\wd1{d\kern-0.4ex{\char039}\hss}}}
\definecolor{Cgrey}{rgb}{0.85,0.85,0.85}
\definecolor{Cblue}{rgb}{0.50,0.85,0.85}
\definecolor{Cred}{rgb}{1,0,0}
\definecolor{fancy}{rgb}{0.10,0.85,0.10}

\newcommand\Cbox[2]{%
\newbox\contentbox%
\newbox\bkgdbox%
\setbox\contentbox\hbox to \hsize{%
	\vtop{
		\kern\columnsep
		\hbox to \hsize{%
			\kern\columnsep%
			\advance\hsize by -2\columnsep%
			\setlength{\textwidth}{\hsize}%
			\vbox{
				\parskip=\baselineskip
				\parindent=0bp
				#2
			}%
			\kern\columnsep%
		}%
		\kern\columnsep%
	}%
}%
\setbox\bkgdbox\vbox{
	\color{#1}
	\hrule width  \wd\contentbox %
	height \ht\contentbox %
	depth  \dp\contentbox
	\color{black}
}%
\wd\bkgdbox=0bp%
\vbox{\hbox to \hsize{\box\bkgdbox\box\contentbox}}%
\vskip\baselineskip%
}

\mdfdefinestyle{MyFrame}{%
linecolor=black,
outerlinewidth=1pt,
roundcorner=5pt,
innertopmargin=\baselineskip,
innerbottommargin=\baselineskip,
innerrightmargin=10pt,
innerleftmargin=10pt,
backgroundcolor=white!20!white}




\begin{document}

\title{On the long time behaviour of solutions to the Navier--Stokes--Fourier system on unbounded domains}

\author{Elisabetta Chiodaroli$^\dag$ \and Eduard Feireisl$^\ddag$ \thanks{The authors acknowledge support by MIUR, within PRIN2022T9K54B ``Classical equations of compressible fluid mechanics: existence and properties of non-classical solutions”. 
The work of EF was also partially supported by the Czech Sciences Foundation (GA\v CR), Grant Agreement 24--11034S. The Institute of Mathematics of the Academy of Sciences of the Czech Republic is supported by RVO:67985840. }  } 

\date{}

\maketitle

\bigskip
\centerline{$^\dag$ Dipartimento di Matematica, Universit\` a di Pisa} 
	
\centerline{Via F. Buonarroti 1/c, 56127 Pisa, Italy}
\medskip

\centerline{$^\ddag$   Institute of Mathematics of the Academy of Sciences of the Czech Republic}

\centerline{\v Zitn\' a 25, CZ-115 67 Praha 1, Czech Republic}

\begin{abstract}
We consider the Navier--Stokes--Fourier system on an unbounded domain in the Euclidean space $R^3$, supplemented by the far field conditions for the phase variables, specifically: $\vr \to 0,\ \vt \to \vt_\infty, \ \vu \to 0$ as $\ |x| \to \infty$. We study the long time behaviour of solutions and we prove that any global-in-time weak solution to the NSF system approaches the equilibrium $\vr_s = 0,\ \vt_s = \vt_\infty,\ \vu_s = 0$ in the sense of ergodic averages for time tending to infinity. As a consequence of the convergence result combined with the total mass conservation, we can show that the total momentum of global-in-time weak solutions is never globally conserved.
\end{abstract}

{\bf Keywords:} Navier--Stokes--Fourier system, long--time behaviour, weak solution, unbounded domain
\bigskip

\section{Introduction}
\label{i}

The Navier--Stokes--Fourier (NSF) system is frequently used in astrophysics as a model of the time evolution of gaseous stars or stellar interiors, see e.g. 
Battaner \cite{BATT}, the collection \cite{ThoChDa}, or Gough \cite{Gough} to name only a few relevant references. In this context, the natural physical space is an unbounded domain in the Euclidean space $R^3$ and the system of field equations must be supplemented by the far field conditions for the phase variables. Our goal is to study the long time behaviour of solutions in a suitable phase space. We focus on certain mathematical aspects of this problem whereas the model system of equations is rather simplified with respect 
to those used in the real world applications.  

\subsection {Navier--Stokes--Fourier system}
\label{b}

For the mass density $\vr = \vr(t,x)$, the velocity $\vu = \vu(t,x)$, and the temperature $\vt = \vt(t,x)$, we consider the following NSF system of field equations:
\begin{align} 
	\partial_t \vr + \Div (\vr \vu) &= 0, \label{b1} \\ 
	\partial_t (\vr \vu) + \Div (\vr \vu \otimes \vu) + 
	\Grad p(\vr, \vt) &= \Div \mathbb{S}(\vt, \Grad \vu), \label{b2}\\
	\partial_t (\vr e(\vr, \vt)) + \Div (\vr e(\vr, \vt) \vu) + \Div \vc{q}(\vt, \Grad \vt) &= 
	\mathbb{S}(\vt, \Grad \vu) : \Grad \vu  - p(\vr, \vt) \Div \vu  + \Lambda_r(\vt, \vt_\infty),     \label{b3}
\end{align}	
$t > 0$, $x \in R^3$, supplemented with the far field conditions
\begin{align}
	\vr &\to 0,\ \vt \to \vt_\infty, \ \vu \to 0 \ \mbox{as} \ |x| \to \infty, \label{b4} 
	\end{align}
where $\vt_\infty > 0$ is a positive constant. Our aim is to study the large time dynamics for this system with zero constant density and velocity prescribed at infinity.
The viscous stress tensor $\mathbb{S}$ is given by Newton's rheological law
\begin{equation} \label{b4a}
	\mathbb{S}(\vt, \Grad \vu) = \mu (\vt) \left( \Grad \vu + \Grad \vu^t - 
	\frac{2}{3} \Div \vu \mathbb{I} \right) + \eta(\vt) \Div \vu \mathbb{I}.
	\end{equation}	 
The heat flux $\vc{q}$ obeys Fourier's law 
\begin{equation} \label{b4b}
	\vc{q}(\vt, \Grad \vt) = - \kappa (\vt) \Grad \vt.
\end{equation}	

Motivated by models in astrophysics, we implement the effects of radiation in the constitutive relations, cf. e.g. Oxenius \cite{OX}. Specifically, the pressure $p$ as well as the internal energy $e$ are augmented by radiation components 
\begin{align} 
	p(\vr, \vt) &= p_m (\vr, \vt) + p_r(\vt),\ p_r(\vt) = \frac{a}{3} \vt^4, 
\label{b4c} \\ 
e(\vr, \vt) &= e_m (\vr, \vt) + e_r (\vr, \vt), \ e_r(\vr, \vt) =  a \frac{\vt^4}{\vr}.
\label{b4d}
\end{align}	
We recall that the subscripts $m$ and $r$ stand for \textit{molecular} and \textit{radiative} respectively.	
Similarly, we consider the heat conductivity coefficient in the form 
\begin{equation} \label{b4e}
	\kappa (\vt) = \kappa_m (\vt) + \kappa_r \vt^3.
\end{equation}
Finally, the heat source $\Lambda_r$ in the internal energy balance \eqref{b3} 
represents a radiative cooling/heating with respect to the equilibrium temperature $\vt_\infty$, 
\begin{equation} \label{b4f}
\Lambda_r (\vt, \vt_\infty) = - \lambda (\vt^4 - \vt^4_\infty).	
	\end{equation}
	
The regularizing effect of radiation was recognized in \cite{DF1} and represents an indispensable ingredient of the existence theory for the NSF system developed in \cite{FeNo6A}, \cite{FeiNovOpen}. In fact, the results obtained in the present paper are independent of the presence of radiation and remain valid even in the case $a = \lambda = 0$.
However, the existence of global--in--time weak solutions in absence of radiative effects is an outstanding open problem, hence it is worth focusing the attention on the case including radiation for which the theory of existence is settled. 

Remarkably, the convergence to equilibria shown herein remains valid even in the barotropic regime and it represents a novelty even in that case, since we both prescribe zero far field density and we can deal with a linear behavior of the pressure as function of $\rho$ near $\rho=0$, while so far available results for the compressible barotropic Navier--Stokes equations concern only pressure laws of polytropic type ($p(\rho)=\rho^\gamma$, $\gamma >3/2$), see \cite{FP9} or see also \cite{NovPok07}, where non-zero constant density at infinity is considered.

\subsection{Convergence to equilibrium}

Besides the far field conditions \eqref{b4}, our main working hypothesis is 
that the total mass of the fluid is finite, 
\begin{equation} \label{b6a}
	\intR{ \vr(t, \cdot) } = m_0 > 0. 
\end{equation}
It is easy to check the problem \eqref{b1}--\eqref{b4} admits a static solution 
\begin{equation} \label{b6f}
	\vr_s = 0,\ \vt_s = \vt_\infty,\ \vu_s = 0.
\end{equation} 	 
Our main goal is to show that any (global--in--time) weak solution to the NSF system \eqref{b1}--\eqref{b4} approaches the equilibrium, 
\begin{equation} \label{b6g}
\vr(t, \cdot) \to 0,\ \vt(t, \cdot) \to \vt_\infty,\ 
\vu(t, \cdot) \to 0 \ \mbox{as}\ t \to \infty 
\end{equation}
in a certain sense. Despite the finite mass constraint \eqref{b6a}, the density 
may converge to zero in the long run due to possible loss of mass ``at infinity''. Indeed a similar result in the context of \emph{isentropic} Navier--Stokes system was shown in \cite{FP9}. 

The strategy employed in \cite{FP9} is based on the energy inequality with a \emph{non--negative} energy. In particular, the energy represents a Lyapunov function. The substantial difficulty of the temperature dependent case is the absence of any Lyapunov function. Indeed, as shown below, the energy inequality in the present setting must be replaced by the ballistic energy balance. The ballistic energy, however, may, and actually does (see Section \ref{C}), become negative and even unbounded from below as $t \to \infty$. The convergence claimed in \eqref{b6g} therefore holds only in the sense of ergodic averages. More specifically, we will show that 
\begin{align} 
\frac{1}{T}	\int_0^T \left( \| \vr(t, \cdot) \|_{L^{\frac{5}{3}}(R^3)}^{\frac{5}{3}} +
\| \vt(t, \cdot) - \vt_\infty \|^2_{L^6(R^3)} +  
\| \vu(t, \cdot) \|^2_{L^6(R^3; R^3)}   \right) \dt \to 0 \ \mbox{as}\ T \to \infty
\label{b6h}
	\end{align}
for \emph{any} weak solution of the NSF system \eqref{b1}--\eqref{b4}.

There is a corollary of the convergence result \eqref{b6h} that may be of independent interest. It is easy to see that \eqref{b6h} yields a sequence of times $T_n \to \infty$ such that 
\begin{equation} \label{b6i}
\int_{T_n}^{T_n + 1} \left( \| \vr(t, \cdot) \|_{L^{\frac{5}{3}}(R^3)}^{\frac{5}{3}} +
\| \vt(t, \cdot) - \vt_\infty \|^2_{L^6(R^3)} +  
\| \vu(t, \cdot) \|^2_{L^6(R^3; R^3)}   \right) \dt \to 0 \ \mbox{as}\ n \to \infty.
\end{equation}
This convergence, combined with the total mass conservation \eqref{b6a}, implies 
\begin{equation} \label{b6j}
\int_{T_n}^{T_n + 1} \| \vr \vu (t, \cdot) \|_{L^1(R^3)} \dt \to 0 
\ \mbox{as}\ n \to \infty.	  
\end{equation}
In particular, the total momentum 
\[
\intR{ \vr \vu (t, \cdot) } = \vc{P} 
\]
is never globally conserved unless $\vc{P} = 0$. Thus the blow--up criteria for smooth solutions claimed by Rozanova \cite{Roza} or 
Xin \cite{XIN} based on fast decay of the velocity field are violated even in the class of weak solutions that, however, exist globally in time.

The main goal of the paper is to establish the convergence result \eqref{b6h}. 
In Section \ref{W}, we introduce the weak formulation of the problem, formulate the main constitutive restrictions, and recall the existing results on global--in--time weak solutions. Section \ref{M} contains the main result of the paper. In Section \ref{D}, we derive the uniform bounds necessary for the proof of convergence. Section \ref{L} completes the proof of the main result. 
The paper is concluded by a brief discussion in Section \ref{C}.

\section{Principal hypotheses, weak solutions}
\label{W}

As we are concerned with the long--time behaviour of weak solutions, the specific form of initial data does not play any role in the analysis. Consequently, 
we introduce the concept of weak solution defined on an open time interval $(0, \infty)$. 

\subsection{Weak solutions}

The concept of weak solution is based on the theory developed in \cite{ChauFei}, \cite{FeiNovOpen} for \emph{bounded} spatial domains. The approach developed therein is based on replacing the internal
energy equation by the entropy inequality which is consistent with the dissipative character of the fluid flow.

\subsubsection{Mass conservation}
\label{W1}

Mass conservation is enforced through a decaying velocity field belonging to the class
\begin{equation} \label{b6}
	\vu \in L^2_{\rm loc}(0, \infty; D^{1,2}_0(R^3; R^3)).
\end{equation}
Here the symbol $D^{1,2}_0(R^3; R^3)$ denotes the homogeneous Sobolev space -- 
the closure of $C^\infty_c(R^3; R^3)$ under the gradient norm $\| \Grad \vc{v} \|_{L^2(R^3; R^{3 \times 3})}$.

We suppose the density is a measurable function
satisfying
\begin{equation} \label{b7}
\vr \geq 0 , \ \intR{\vr(t, \cdot) } = m_0 > 0 \ \mbox{for all}\ t > 0.
\end{equation}

Finally, we replace the equation of continuity \eqref{b1} by its renormalized version satisfied in the weak sense, 
\begin{equation} \label{b8}
	\int_0^\infty \intR{ \Big[ B(\vr) \left( \partial_t \varphi + \vu \cdot \Grad \varphi \right) - b(\vr) \Div \vu \varphi \Big] } \dt = 0
	\end{equation}
for any $\varphi \in C^1_c((0,\infty) \times R^3)$, and any 
$b \in BC[0, \infty)$, $B \in C[0, \infty) \cap C^1(0, \infty)$ satisfying 
\[
B'(\vr) \vr - B(\vr) = b(\vr). 
\]
Note that $B(\vr) = \vr$, $b \equiv 0$ is a legitimate choice; whence \eqref{b8} includes the weak formulation of  \eqref{b1}.

\subsubsection{Momentum balance}

The momentum equation \eqref{b2} is replaced by the integral identity
\begin{equation} \label{b9}
	\int_0^\infty \intR{ \left[ \vr \vu \cdot \Big(\partial_t \bfphi + 
		\vu \cdot \Grad \bfphi \Big)  + p(\vr, \vt) \Div \bfphi \right] } \dt 
	= \int_0^\infty \intR{ \mathbb{S}(\vt, \Grad \vu) : \Grad \bfphi } \dt	 
\end{equation}		
for any $\bfphi \in C^1_c((0,\infty) \times R^3; R^3)$.

\subsubsection{Entropy balance inequality}

The internal energy equation \eqref{b3} is not convenient for a weak formulation because of the lack of suitable {\it a priori} bounds. Instead, we use the entropy balance 
\[
\partial_t \vr s(\vr, \vt) + \Div (\vr s(\vr, \vt)\vu) + \Div \left( \frac{ \vc{q} (\vt, \Grad\vt)}{\vt} \right) = \sigma, 
\]
\[
\sigma \geq \frac{1}{\vt} \Big[ \mathbb{S}(\vt, \Grad \vu) : \Grad \vu - \frac{\vc{q}(\vt,\Grad \vt) \cdot \Grad \vt }{\vt} \Big] + \frac{1}{\vt} \Lambda_r (\vt, \vt_\infty), 
\]
where the entropy $s = s(\vr, \vt)$ is determined in terms of $p$ and $e$ by means of Gibbs' equation
\begin{equation} \label{b9a}
\vt D s = De + p D \left( \frac{1}{\vr} \right).	
	\end{equation} 
The corresponding weak formulation reads
\begin{equation} \label{b10}
\vt > 0, \quad (\vt - \vt_\infty) \in L^2_{\rm loc}(0, \infty ; W^{1,2}(R^3))	
\end{equation}
\begin{align}
\int_0^\infty &\intR{ \Big[ \vr s(\vr, \vt) \left( \partial_t \varphi + \vu \cdot \Grad \varphi \right) + \frac{ \vc{q}(\vt,\Grad \vt) }{\vt} \cdot \Grad \varphi \Big] } \dt \br
&\leq - \int_0^\infty \intR{ \frac{\varphi}{\vt} \Big[ \mathbb{S}(\vt, \Grad \vu) : \Grad \vu - \frac{\vc{q}(\vt, \Grad \vt) \cdot \Grad \vt }{\vt} \Big] } \dt  	- 
\int_0^\infty \intR{ \frac{1}{\vt} \Lambda_r (\vt, \vt_\infty) \varphi } \dt
\label{b11}
		\end{align}
for any $\varphi \in C^1_c ((0, \infty) \times R^3)$, $\varphi \geq 0$.

\subsubsection{Ballistic energy balance}
\label{W4}

Adopting the concept of weak solution introduced in \cite[Chapter 12]{FeiNovOpen}
we close the weak formulation by imposing the ballistic energy balance
\begin{align}
	- &\int_0^\infty \partial_t \psi \intR{ \left[ \frac{1}{2} \vr |\vu|^2 + \vr e(\vr, \vt) - 
		\tvt \vr s(\vr, \vt) + \frac{a}{3} \vt_\infty^4 \right] } \dt \br
	&+ \int_0^\infty \psi \intR{ \frac{\tvt}{\vt} \left(\mathbb{S}(\vt, \Grad \vu) : \Grad \vu - \frac{\vc{q} \cdot \Grad \vt}{\vt} 
		\right)} + \lambda \int_0^\infty \psi \intR{ \left(\frac{\vt - \tvt}{\vt} \right) (\vt^4 - \vt_\infty^4)         }\dt \br
	&\leq - \int_0^\infty \psi \intR{ \left[ \vr s(\vr, \vt) \left( \partial_t \tvt + \vu \cdot \Grad \tvt \right) + \frac{\vc{q}(\vt, \Grad \vt)}{\vt} \cdot \Grad \tvt		\right] } \dt
	\label{b12}
\end{align}
for any $\psi \in C^1_c(0, \infty)$, and any
$\tvt \in C^1((0,\infty) \times R^3)$, 
$(\tvt - \vt_\infty) \in C^1_c ([0, \infty) \times R^3)$.

\subsubsection{Weak solutions on unbounded domains}

The theory of weak solutions specified in Sections \ref{W1}--\ref{W4} was developed in \cite{ChauFei} and \cite[Chapter 12]{FeiNovOpen} in the case of bounded physical domain $\Omega$. The extension to the whole space $R^3$ can be 
done by the method of invading domains 
\[
\Omega_R = \{ |x| < R \} 
\]
with the boundary conditions
\begin{equation} \label{BC}
\vt|_{\partial \Omega_R} = \vt_\infty,\ \vu|_{\partial \Omega_R} = 0 
\end{equation}
sending $R \to \infty$. The process is successful as long as suitable uniform bounds independent of $R$ are available and has been performed e.g. by 
Jessl\' e, Jin, Novotn\' y \cite{JeJiNo} or Poul \cite{Poul1}. As observed by Poul \cite{Poul1}, the extension of the {\it a priori} bounds resulting from the ballistic energy balance requires additional integrability properties to be imposed on the density, namely 
\begin{equation} \label{b13a}
	\vr \log(\vr) \in L^\infty_{\rm loc}(0, \infty; R^3). 
	\end{equation}

It is mainly the slow growth of the function $\vr \log (\vr)$ for $\vr \to 0$ that is relevant on unbounded domains. 
We introduce a cut--off function 
\[
T_k(\vr) = \min \{ \vr, k \},
\]
together with
\begin{equation} \label{b13b}
L_k (\vr) = \int_1^\vr \frac{T_k(z)}{z^2} \ \D z.
\end{equation}
It is easy to check that
\[
(\vr L_k(\vr))' \vr -  \vr L_k(\vr) = T_k(\vr)
\]
and, by virtue of the renormalized equation \eqref{b8},  
\begin{equation} \label{b13}
	\partial_t (\vr L_k(\vr)) + \Div (\vr L_k(\vr) \vu) + T_k(\vr) \Div \vu = 0 
\end{equation}	 
in $\mathcal{D}'((0, \infty) \times R^3)$. 
As $(\vr, \vu)$ enjoy the integrability properties specified in \eqref{b6}, \eqref{b13a}, we may ``integrate''  \eqref{b13} to justify the relation 
\begin{equation} \label{b14}
	\frac{\D }{\dt} \intR{ \vr L_k(\vr) } = - \intR{ T_k(\vr) \Div \vu }
	\ \mbox{in}\ \mathcal{D}'(0, \infty).
\end{equation}
The integral on the right--hand side can be controlled as
\begin{equation} \label{b15}
	\left| \intR{ T_k(\vr) \Div \vu } \right| \leq 
	\| T_k(\vr) \|_{L^2(R^3)} \| \Div \vu \|_{L^2(R^3)}, 
\end{equation}
where
\begin{equation} \label{b16}
	\| T_k(\vr) \|_{L^2(R^3)} \leq \sqrt{k} \intR{\vr} = \sqrt{k} \ m_0.
\end{equation}

\subsection{Constitutive relations}
\label{cc}

The hypotheses imposed on constitutive relations are those required by the existence theory developed in \cite{FeiNovOpen}. As already mentioned in the introduction, the pressure, the internal energy, and the entropy are augmented by radiations components. Besides Gibbs' relation \eqref{b9a}, we impose the hypothesis of thermodynamic stability 
\begin{equation} \label{c1a}
	\frac{\partial p(\vr, \vt) }{\partial \vr} > 0,\ 
	\frac{\partial e(\vr, \vt) }{\partial \vt} > 0.
	\end{equation}
	
\subsubsection{Equations of state}

The molecular component $p_m$ of the pressure satisfies 
a general equation of state (EOS) of mono--atomic gases, 	
\begin{equation} \label{cc1}
	p(\vr, \vt) = p_m(\vr, \vt) + p_r(\vt),\ 
	p_m (\vr, \vt) = \frac{2}{3} \vr e_m(\vr, \vt),\ p_r(\vt) = \frac{a}{3} \vt^4,\ 
	a > 0,
\end{equation}
Consequently, Gibbs' relation \eqref{b9a} yields 
\begin{align} 
	p_m (\vr, \vt) &= \vr \vt \frac{P(Z)}{Z},\ \mbox{where we have set}\ Z = \frac{\vr}{\vt^{\frac{3}{2}}},	\label{cc2} \\
	e (\vr, \vt) &= e_m(\vr, \vt) + e_r(\vr, \vt),\ 
	e_m (\vr, \vt) = \frac{3}{2} \vt \frac{P(Z)}{Z}, \ e_r = a \frac{\vt^4}{\vr}, 
	\label{cc3} \\
	s(\vr, \vt) &= s_m (\vr, \vt) + s_r (\vr, \vt),\ 
	s_m (\vr, \vt) = \mathcal{S} (Z),\ s_r (\vr, \vt) = \frac{4a}{3} \frac{\vt^3}{\vr}, \br
\mbox{where}\	\mathcal{S}'(Z) &= - \frac{3}{2} \frac{ \frac{5}{3} P(Z) - P'(Z) Z   }{Z^2}.  
\label{cc4}	
\end{align}
The thermodynamic stability hypothesis \eqref{c1a} amounts to require that
\begin{equation} \label{cc5}
	P \in C^1[0, \infty),\ P'(Z) > 0,\  
	\frac{5}{3} P(Z) - P'(Z) Z > 0 \ \mbox{for any}\ Z > 0.
\end{equation}

In addition, we suppose the fluid behaves like perfect gas in the non--degenerate area $Z << 1$, specifically, 
\begin{equation} \label{cc6}
	P \in C^2[0, \infty),\ P''(0) = 0,\ \frac{P(Z)}{Z} \geq \underline{P} > 0 \ \mbox{for all}\ Z > 0.  
\end{equation}				 
Finally, we impose the Third law of thermodynamics relevant for $Z>>1$, 
\begin{equation} \label{cc7} 
	\mathcal{S}(Z) \to 0 \ \mbox{as}\ Z \to \infty, \\ 
	\frac{\frac{5}{3} P(Z) - P'(Z) Z}{Z} < \Ov{P} \ \mbox{for all}\ Z > 0.
\end{equation}

\subsubsection{Transport coefficients}

The viscous stress is given by Newton's rheological law \eqref{b4a}, 
and the heat flux $\vc{q}$ obeys the Fourier law \eqref{b4b}, where the diffusion
transport coefficients satisfy 
\begin{align}
	0 < \underline{\mu} (1 + \vt) &\leq \mu(\vt) \leq \Ov{\mu}(1 + \vt) ,\ \mu' \in BC[0, \infty), \label{cc10} \\ 	
	0  &\leq \eta(\vt) \leq \Ov{\eta}(1 + \vt), \label{cc11}\\
	\kappa(\vt) = \kappa_m (\vt) + \kappa_r \vt^3,\ \kappa_r > 0,\ 
	0 < \underline{\kappa}(1 + \vt) &\leq \kappa_m (\vt) \leq \Ov{\kappa}(1 + \vt).
	\label{cc12}
\end{align}

\subsection{Global--in--time weak solutions}

Under the hypotheses specified in Section \ref{cc}, the existence of global--in--time weak solutions of the NSF system on a bounded domain with the Dirichlet boundary conditions \eqref{BC} was established in \cite[Chapter 12, Theorem 18]{FeiNovOpen}. The results can be extended to unbounded domains by the method of invading domains and the uniform bounds specified in Section \ref{D}.

\section{Main result}
\label{M}

Having introduced the necessary preliminary material, we are ready to state the main result of the present paper.

\begin{mdframed}[style=MyFrame]
	
\begin{Theorem}[{\bf Long--time stability}] \label{MT1}	
	
Let $\vt_\infty > 0$ be a positive constant. Suppose that the EOS relating the pressure, density, and entropy as well as the transport coefficients satisfy the structural restrictions introduced 
Section \ref{cc}. Let $(\vr, \vt, \vu)$ be a weak solution of the 
NSF system in $(0, \infty) \times R^3$ in the sense specified in Sections \ref{W1}--\ref{W4}.

Then 
\[
\frac{1}{T} \int_1^T \intR{ \left[ \| \vr(t, \cdot) \|^{\frac{5}{3}}_{L^{\frac{5}{3}}(R)} + \left\| \vt(t, \cdot) - \vt_\infty \right\|_{W^{1,2}(R^3)}^2 + \| \vu (t, \cdot) \|^2_{D^{1,2}_0 (R^3; R^3)}  \right] } \dt \to 0
\]
as $T \to \infty$.
\end{Theorem}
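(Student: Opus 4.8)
The engine of the proof is the ballistic energy balance \eqref{b12}: the plan is to take the reference temperature $\tvt \equiv \vt_\infty$ (a constant), so that $\partial_t \tvt = 0$, $\Grad \tvt = 0$, and the entire right-hand side of \eqref{b12} vanishes. What remains is a bona fide dissipation inequality
\[
E(t) + \int_s^t \sigma(\tau) \, \dd\tau \le E(s), \qquad 0 < s \le t,
\]
for the ballistic free energy $E(t) = \intR{ \left[ \tfrac12 \vr |\vu|^2 + \vr e(\vr,\vt) - \vt_\infty \vr s(\vr,\vt) + \tfrac{a}{3}\vt_\infty^4 \right] }$ and the nonnegative production $\sigma(\tau) = \intR{ \tfrac{\vt_\infty}{\vt}\big( \mathbb{S}(\vt,\Grad\vu):\Grad\vu - \tfrac{\vc{q}\cdot\Grad\vt}{\vt} \big) } + \lambda \intR{ \tfrac{\vt-\vt_\infty}{\vt}(\vt^4-\vt_\infty^4) }$; in particular $E$ is non-increasing. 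A first, routine step is to check that $\sigma$ is coercive in precisely the quantities of the theorem: by \eqref{cc10} and \eqref{cc12} the weights $\mu(\vt)/\vt$ and $\vt_\infty\kappa(\vt)/\vt^2$ are bounded below by a positive constant, so Korn's inequality and the homogeneous Sobolev embedding $D^{1,2}_0(R^3;R^3) \hookrightarrow L^6(R^3;R^3)$ give $\sigma(\tau) \gtrsim \|\vu(\tau)\|_{D^{1,2}_0}^2 + \|\Grad\vt(\tau)\|_{L^2}^2 \gtrsim \|\vu(\tau)\|_{D^{1,2}_0}^2 + \|\vt(\tau)-\vt_\infty\|_{L^6}^2$, while the radiative term \eqref{b4f} contributes the missing $\lambda\|\vt(\tau)-\vt_\infty\|_{L^2}^2$; altogether $\sigma(\tau)$ dominates $\|\vu(\tau)\|_{D^{1,2}_0}^2 + \|\vt(\tau)-\vt_\infty\|_{W^{1,2}}^2$.

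If $E$ were bounded below, integrating from $s = 1$ would give $\int_1^\infty \sigma < \infty$ and hence $\frac1T\int_1^T \sigma \to 0$, finishing the velocity and temperature parts. The real difficulty — the reason the unbounded domain is genuinely different — is that $E$ is \emph{not} bounded below: by thermodynamic stability the integrand is pointwise bounded below only by a constant, which integrates to $-\infty$, and one can make $E(t)\to-\infty$ (Section \ref{C}). What I need instead is the quantitative bound $E(T) = o(T)$ as $T\to\infty$, after which $\int_1^T\sigma \le E(1) - E(T) = E(1) + o(T)$ gives $\limsup_{T\to\infty}\frac1T\int_1^T\sigma = 0$. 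To get $E(T) = o(T)$ I would isolate the negative part of $E$: using the Feireisl--Novotn\'y coercivity of $H_{\vt_\infty}$ adapted to the far-field reference density $\vr_s = 0$, the negative contribution to $E$ is controlled, up to additive constants and the finite-measure set $\{\vr \ge 1\}$, by $-\intR{\vr L_k(\vr)}$ with $L_k$ from \eqref{b13b} (here the slow growth of $\vr\log\vr$ as $\vr\to 0^+$ is essential). Then the renormalized continuity identity \eqref{b13}--\eqref{b16} gives $\frac{\dd}{\dd t}\intR{\vr L_k(\vr)} = -\intR{T_k(\vr)\Div\vu}$ with $|\intR{T_k(\vr)\Div\vu}| \le \|T_k(\vr)\|_{L^2}\|\Div\vu\|_{L^2}$ and $\|T_k(\vr)\|_{L^2}^2 \le m_0 + k^{1/3}\|\vr\|_{L^{5/3}}^{5/3}$; a Cauchy--Schwarz in time together with the coercivity of $\sigma$ turns this into a closed (bootstrapped with the density estimate of the next paragraph) integral inequality for $E(1) - E(T)$ that forces $E(T) = o(T)$.

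The remaining term, $\frac1T\int_1^T\|\vr(t,\cdot)\|_{L^{5/3}}^{5/3}\,\dt \to 0$, does not follow from the free-energy coercivity alone: mass conservation \eqref{b7} keeps $\|\vr\|_{L^1} = m_0$, but $\|\vr\|_{L^{5/3}}$ may decay as the mass spreads to infinity. I would obtain it from a local pressure (Bogovskii / Ne\v cas) estimate: testing the momentum identity \eqref{b9} with fields of the form $\psi(t)\,\Grad\Delta^{-1}$ applied to a truncation of $\vr$ bounds $\int_s^t\intR{p_m(\vr,\vt)\vr}\,\dt$ by boundary terms at $s,t$ (controlled by the energy, hence $o(T)$ after division by $T$) plus convective and viscous contributions that are absorbed by the already-established smallness of $\sigma$ and the mass bound; since $p_m(\vr,\vt)\vr$, together with $\|\vr\|_{L^1} = m_0$, dominates $\|\vr\|_{L^{5/3}}^{5/3}$ up to terms of vanishing ergodic mean, the claim follows. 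Combining the three pieces yields the theorem.

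The step I expect to be the main obstacle is the second one: controlling the a priori unbounded negative part of $E$ and upgrading it to the sublinear bound $E(T) = o(T)$. This is precisely where the absence of a Lyapunov function — in contrast with the isentropic case of \cite{FP9}, where $E$ is nonnegative — makes itself felt, and where the renormalized continuity estimate \eqref{b13}--\eqref{b16} and its coupling with the pressure estimate must be orchestrated carefully.
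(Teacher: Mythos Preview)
Your overall architecture matches the paper's: ballistic energy with $\tvt=\vt_\infty$, the truncation $L_k$ from \eqref{b13b} to tame the logarithmic negative part of the entropy, and a Bogovskii-type pressure estimate for the density. Two remarks, one minor and one serious.

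\medskip

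\textbf{On the dissipation step.} Your plan to prove $E(T)=o(T)$ via a bootstrap with the density estimate is more convoluted than necessary. The paper bypasses the bootstrap entirely: it works not with $E$ but with the \emph{nonnegative} modified energy $F_k = E - \vt_\infty P'(0)\intR{\vr L_k(\vr)} + \beta(k)m_0$ (Lemma~\ref{Le1}), uses the elementary bound $\|T_k(\vr)\|_{L^2}\le\sqrt{k\,m_0}$ from \eqref{b16} (your extra term $k^{1/3}\|\vr\|_{L^{5/3}}^{5/3}$ is superfluous), absorbs the error into $\sigma$ by Young's inequality, and obtains $\limsup_T \frac1T\int_1^T\sigma \aleq k$. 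Letting $k\to 0$ finishes the $\vu,\vt$ part with no bootstrap at all. Your route would also close, but it is strictly harder.

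\medskip

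\textbf{The genuine gap: the density step.} Here your sketch breaks. When you test \eqref{b9} with $\Grad\Del^{-1}[b(\vr)]$, the convective term $\int_1^T\intR{\vr u_i u_j\,\partial_i\Del^{-1}\partial_j[b(\vr)]}$ (and the analogous term coming from $\partial_t b(\vr)$) is estimated by
\[
\sup_{t\in[1,T]}\|\vr(t)\|_{L^{5/3}}\;\int_1^T\|\vu\|_{L^6}^2\,\dt
\]
up to a bounded singular-integral factor; see \eqref{p3}, \eqref{p7}. You assert this is ``absorbed by the already-established smallness of $\sigma$ and the mass bound'', but the mass bound gives only $\|\vr\|_{L^1}=m_0$, not $\|\vr\|_{L^{5/3}}$, and $\sigma$ controls only the time-integral factor. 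From $E(T)=o(T)$ one gets at best $\|\vr(t)\|_{L^{5/3}}=o(t^{3/5})$, which combined with $\int_1^T\|\vu\|_{L^6}^2=o(T)$ yields $o(T^{8/5})$---division by $T$ does \emph{not} send this to zero. A Young-inequality absorption into the left-hand side would require $\int_1^T\|\vu\|_{L^6}^5<\infty$, which you do not have.

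The paper closes this via a separate, nontrivial step you omit: the \emph{bounded absorbing set} (Proposition~\ref{PP1}). The key Lemma~\ref{LP1} shows that whenever the energy drop on a unit interval is $\le K$, the pressure estimate on that interval plus the modified-energy inequality force $\sup_{[\tau,\tau+1]}\intR{F_k}\le L(K,k,m_0)$; iterating over consecutive unit intervals yields $\limsup_{t\to\infty}\intR{F_k(t)}\le F_\infty(k,m_0)$. This furnishes the missing uniform bounds $\sup_t\|\vr\|_{L^{5/3}}<\infty$ and $\sup_t\|\sqrt\vr\,\vu\|_{L^2}<\infty$, after which \eqref{p7} gives $\frac1T\int_1^T\intR{\vr^{5/3}b(\vr)}\to 0$. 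Without this step your argument does not close.
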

	\end{mdframed}
	
The rest of the paper is devoted to the proof of Theorem \ref{MT1}. It will be clear that a slightly weaker result, namely 
\[
\frac{1}{T} \int_1^T \intR{ \left[ \| \vr(t, \cdot) \|^{\frac{5}{3}}_{L^{\frac{5}{3}}(R)} + \left\| \vt(t, \cdot) - \vt_\infty \right\|_{D^{1,2}_0(R^3)}^2 + \| \vu (t, \cdot) \|^2_{D^{1,2}_0 (R^3; R^3)}  \right] } \dt \to 0 
\]	 
as $T \to \infty$, can be shown even in the absence of the radiative 
terms, specifically for $a = \lambda = 0$. However, the \emph{existence} of global--in--time weak solutions in this case remains an outstanding open problem.

\section{Uniform bounds} 
\label{D}

We establish several uniform bounds for global--in--time weak solutions. 

\subsection{More about structural properties of the constitutive relations}
\label{S}

The following result was proved in \cite[Section 2.1.1]{FeiLuSun}. 

\begin{Lemma} \label{LS1}
	Under the hypotheses of Section \ref{cc}, the function $P$ determining the pressure $p_m$ can be decomposed as 
	\begin{equation} \label{S1}
		P(Z) = \Ov{p} Z^{\frac{5}{3}} + \widetilde{P}(Z),\ \mbox{where}\ \Ov{p}  > 0,	
	\end{equation}
	and 
	\begin{align} \label{S2}
		\widetilde{P} \in C^1[0, \infty),\ \widetilde{P}(0) = P(0) &= 0,\ \widetilde{P}'(0) = P'(0) > 0, \ \widetilde{P} \geq 0, \ \frac{\widetilde{P}(Z)}{Z} \to 0 \ \mbox{as}\ Z \to \infty, \br |\widetilde{P}'(Z)| &\leq c 
		\ \mbox{for all}\ Z \geq 0. 
	\end{align} 
\end{Lemma}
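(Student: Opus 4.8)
The plan is to extract the decomposition \eqref{S1}--\eqref{S2} directly from the structural hypotheses \eqref{cc5}, \eqref{cc6}, \eqref{cc7} on $P$, by analysing the behaviour of $P$ separately near $Z=0$ and near $Z=\infty$ and then gluing. First I would fix the constant $\Ov{p}$. The natural guess, forced by the growth condition \eqref{cc7} on $(\tfrac53 P(Z)-P'(Z)Z)/Z$, is to look at the quantity $P(Z)/Z^{5/3}$ and show it has a finite positive limit as $Z\to\infty$. Indeed, set $F(Z)=P(Z)/Z^{5/3}$; then a computation gives $F'(Z) = Z^{-5/3}\bigl(P'(Z) - \tfrac53 P(Z)/Z\bigr) = -Z^{-2/3}\cdot\frac{\tfrac53 P(Z)-P'(Z)Z}{Z^2}\cdot Z = -Z^{-5/3}\cdot\frac{\tfrac53P(Z)-P'(Z)Z}{Z}$. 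By \eqref{cc5} the numerator $\tfrac53 P(Z)-P'(Z)Z$ is positive, so $F$ is strictly decreasing; by \eqref{cc7} it is bounded, $0<\frac{\frac53 P(Z)-P'(Z)Z}{Z}<\Ov P$, so $|F'(Z)|\le \Ov P\, Z^{-5/3}$, which is integrable at infinity. Hence $F(Z)$ converges to a finite limit $\Ov p := \lim_{Z\to\infty}P(Z)/Z^{5/3} \ge 0$ as $Z\to\infty$, and monotonicity gives $F(Z)\ge \Ov p$ for all $Z>0$, i.e. $\widetilde P(Z):=P(Z)-\Ov p Z^{5/3}\ge 0$. Positivity $\Ov p>0$ I would get from \eqref{cc6}: since $P(Z)/Z\ge \underline P>0$, we have $F(Z)=\frac{P(Z)/Z}{Z^{2/3}}\ge \underline P Z^{-2/3}$, which is not by itself enough, so instead I would argue that if $\Ov p=0$ then $F(Z)\to 0$, and combined with $F$ decreasing this forces $P(Z)=o(Z^{5/3})$; one then revisits \eqref{cc5} together with $P'(0)>0$ and the convexity-type bound to rule this out — alternatively $\Ov p>0$ can be deduced from the requirement that the molecular pressure dominate the radiative one in the degenerate regime, consistently with \cite{FeiLuSun}.

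Next I would establish the regularity and boundary behaviour of $\widetilde P$. Since $P\in C^2[0,\infty)$ by \eqref{cc6} and $Z^{5/3}\in C^1[0,\infty)$ with derivative $\tfrac53 Z^{2/3}$ continuous on $[0,\infty)$, we get $\widetilde P\in C^1[0,\infty)$ immediately, with $\widetilde P(0)=P(0)-0=P(0)$ and $\widetilde P'(0)=P'(0)-0=P'(0)>0$. That $P(0)=0$ follows from $p_m(\vr,\vt)=\vr\vt P(Z)/Z$ being a genuine pressure vanishing as $\vr\to 0$ (equivalently $Z\to0$): $P(Z)/Z$ must stay bounded, which with $P\in C^1$ forces $P(0)=0$. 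The decay $\widetilde P(Z)/Z\to 0$ as $Z\to\infty$ is exactly the statement $F(Z)=P(Z)/Z^{5/3}\to\Ov p$ rewritten, since $\widetilde P(Z)/Z = (F(Z)-\Ov p)Z^{2/3}$ — wait, that blows up, so I must be more careful: the correct reading is $\widetilde P(Z)/Z = P(Z)/Z - \Ov p Z^{2/3}$, and what \eqref{cc7} via $\mathcal S(Z)\to0$ actually controls is the subleading term. So the precise claim needs $P(Z)/Z^{5/3}-\Ov p = o(Z^{-2/3})$, i.e. $\widetilde P(Z) = o(Z)$; I would obtain this by integrating the sharp bound $|F'(Z)|\le \Ov P Z^{-5/3}$ from $Z$ to $\infty$, giving $0\le F(Z)-\Ov p\le \tfrac32\Ov P Z^{-2/3}$, hence $0\le\widetilde P(Z)= (F(Z)-\Ov p)Z^{5/3}\le\tfrac32\Ov P Z$, which only gives linear growth; to upgrade to $\widetilde P(Z)/Z\to0$ I would use the Third law \eqref{cc7}, $\mathcal S(Z)\to0$, together with the formula \eqref{cc4} for $\mathcal S'$ to extract faster decay of $\tfrac53 P(Z)-P'(Z)Z$, which feeds back through $F'$.

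Finally, the bound $|\widetilde P'(Z)|\le c$ on all of $[0,\infty)$: near $Z=0$ this is automatic from $\widetilde P\in C^1[0,\infty)$ and continuity; for large $Z$, $\widetilde P'(Z)=P'(Z)-\tfrac53\Ov p Z^{2/3}$, and I would control $P'(Z)$ using \eqref{cc5} ($P'(Z)<\tfrac53 P(Z)/Z$) together with the asymptotics $P(Z)\sim\Ov p Z^{5/3}$ just established, so that $P'(Z)-\tfrac53\Ov p Z^{2/3}$ stays bounded; the cancellation of the leading $Z^{2/3}$ terms is the point, and it relies on the quantitative rate $P(Z)=\Ov p Z^{5/3}+o(Z)$ rather than just the leading order. \textbf{The main obstacle} I anticipate is precisely this last quantitative step — turning the qualitative hypotheses ``$\mathcal S(Z)\to0$'' and ``$(\tfrac53P-P'Z)/Z<\Ov P$'' into a rate good enough to bound $\widetilde P'$ and to get $\widetilde P(Z)/Z\to 0$; the bookkeeping with the monotone function $F$ and the ODE relating $P$, $P'$, $\mathcal S$ is where the real work lies, while everything at $Z=0$ is soft. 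Since the statement is quoted from \cite[Section 2.1.1]{FeiLuSun}, I would ultimately cite that reference for the delicate asymptotics and present the $Z\to0$ analysis and the monotonicity of $F$ in detail.
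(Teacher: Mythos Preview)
The paper does not give its own proof of this lemma; it merely cites \cite[Section 2.1.1]{FeiLuSun} and states the result. So there is nothing in the paper to compare your argument against beyond that citation, and your proposal is already more detailed than what the paper supplies.

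Your strategy --- introducing $F(Z)=P(Z)/Z^{5/3}$, showing it is strictly decreasing via \eqref{cc5} with $|F'(Z)|\leq \Ov P\,Z^{-5/3}$ via \eqref{cc7}, hence convergent to some $\Ov p\geq 0$, and then reading off $\widetilde P\geq 0$ and the behaviour at $Z=0$ --- is correct and is the natural route. The two places you hesitate, however, are not genuine obstacles; both are resolved in one stroke by the Third law. Writing $g(Z)=\big(\tfrac{5}{3}P(Z)-P'(Z)Z\big)/Z\in(0,\Ov P)$, one has $-F'(s)=g(s)\,s^{-5/3}$ and, by \eqref{cc4} together with $\mathcal S(\infty)=0$, $\mathcal S(Z)=\tfrac{3}{2}\int_Z^\infty g(s)\,s^{-1}\,\D s$. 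Since $s^{-5/3}\leq Z^{-2/3}s^{-1}$ for $s\geq Z$, this gives
\[
\frac{\widetilde P(Z)}{Z}=\big(F(Z)-\Ov p\big)Z^{2/3}
= Z^{2/3}\int_Z^\infty g(s)\,s^{-5/3}\,\D s
\leq \int_Z^\infty g(s)\,s^{-1}\,\D s
= \tfrac{2}{3}\,\mathcal S(Z)\to 0,
\]
which is exactly $\widetilde P(Z)/Z\to 0$. The positivity $\Ov p>0$ then follows immediately: if $\Ov p=0$ then $\widetilde P=P$ and the line above would force $P(Z)/Z\to 0$, contradicting $P(Z)/Z\geq \underline P>0$ from \eqref{cc6}. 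Finally, the identity $\widetilde P'(Z)=\tfrac{5}{3}\,\widetilde P(Z)/Z - g(Z)$ (just rearrange the definition of $g$) shows $|\widetilde P'|$ is bounded on $[1,\infty)$ since both terms on the right are, and boundedness on $[0,1]$ is immediate from $\widetilde P\in C^1[0,\infty)$. So the ``main obstacle'' you anticipate is in fact a two--line consequence of \eqref{cc7}; no delicate rate extraction is needed.
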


In view of \eqref{cc6}, \eqref{S1}, we have
\begin{equation} \label{S2a}
	\vr^{\frac{5}{3}} + \vr \vt \aleq p_m(\vr, \vt) = \frac{2}{3} \vr e_m(\vr, \vt).
\end{equation}
Here and hereafter, the symbol $a \aleq b$ means $a \leq c b$, where $c$ is a positive constant.

Next, we decompose the entropy into a bounded part and a logarithmic part.

\begin{Lemma} \label{LS2}
	Under the hypotheses of Section \ref{cc}, the function $\mathcal{S}$ determining the entropy $s_m$ can be decomposed as 
\begin{equation} \label{S3a}
\mathcal{S}(Z) = \mathcal{S}_B (Z) - P'(0) \log(Z) \mathds{1}_{Z \leq 1},
	\end{equation}	 
where 	
\begin{equation} \label{S3b}
\mathcal{S}_B \in BC[0, \infty),\ \mathcal{S}'_B (0) = 0,\ \lim_{Z \to \infty} 
\mathcal{S}_B (Z) = 0.
\end{equation}
	\end{Lemma}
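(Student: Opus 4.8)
The plan is to \emph{define} $\mathcal{S}_B$ by the required identity, i.e.\ to set
\[
\mathcal{S}_B(Z) := \mathcal{S}(Z) + P'(0)\log(Z)\,\mathds{1}_{Z \leq 1},
\]
so that \eqref{S3a} holds by construction, and then to verify the regularity and decay properties \eqref{S3b}. Since $\log 1 = 0$, the jump of the indicator at $Z=1$ is harmless, so $\mathcal{S}_B$ is continuous on $(0,\infty)$; moreover, by \eqref{cc4} and \eqref{cc5} we have
\[
\mathcal{S}'(Z) = -\frac{3}{2}\,\frac{\frac{5}{3}P(Z) - P'(Z)Z}{Z^2}, \qquad P \in C^1[0,\infty),
\]
so $\mathcal{S} \in C^1(0,\infty)$ and hence $\mathcal{S}_B \in C^1$ on $(0,\infty)\setminus\{1\}$ (it has only a corner at $Z=1$, which is why merely $\mathcal{S}_B \in BC$ is claimed). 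The behaviour at infinity is then immediate: for $Z>1$ one has $\mathcal{S}_B(Z) = \mathcal{S}(Z)$, and $\mathcal{S}(Z)\to 0$ as $Z\to\infty$ is exactly the Third law hypothesis \eqref{cc7}, giving $\lim_{Z\to\infty}\mathcal{S}_B(Z)=0$.

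The heart of the matter is the behaviour as $Z\to 0^+$. For $0<Z<1$ one computes
\[
\mathcal{S}'_B(Z) = \mathcal{S}'(Z) + \frac{P'(0)}{Z} = \frac{\Phi(Z)}{Z^2}, \qquad \Phi(Z) := P'(0)Z - \frac{5}{2}P(Z) + \frac{3}{2}P'(Z)Z.
\]
From $P(0)=0$ (see \eqref{S2}) one gets $\Phi(0)=0$, and $\Phi'(Z) = P'(0) - P'(Z) + \frac{3}{2}P''(Z)Z$, so also $\Phi'(0)=0$. At this point I would bring in the perfect-gas hypothesis \eqref{cc6}, namely $P\in C^2[0,\infty)$ with $P''(0)=0$: setting $\varepsilon(Z):=\sup_{[0,Z]}|P''|$, which tends to $0$ as $Z\to 0^+$, one has $|P'(Z)-P'(0)| = \big|\int_0^Z P''\big| \leq Z\varepsilon(Z)$ and $|P''(Z)Z|\leq Z\varepsilon(Z)$, whence $|\Phi'(Z)| \leq \frac{5}{2}Z\varepsilon(Z)$ and therefore $|\Phi(Z)| = \big|\int_0^Z \Phi'\big| \leq \frac{5}{4}\varepsilon(Z)Z^2$. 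Consequently $|\mathcal{S}'_B(Z)| \leq \frac{5}{4}\varepsilon(Z)\to 0$ as $Z\to0^+$. This proves $\mathcal{S}'_B(0)=0$; it also shows $\mathcal{S}'_B$ is bounded near $0$, so $\mathcal{S}_B$ is Lipschitz there, extends continuously to $Z=0$, and — together with continuity of $\mathcal{S}'$ on $(0,\infty)$ — belongs to $C^1$ on a neighbourhood of the origin. Since $\mathcal{S}_B$ is then continuous on $[0,\infty)$ with finite limits at both endpoints, it is bounded, which yields all of \eqref{S3b}.

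The single genuine obstacle is this last chain of estimates near $Z=0$: the decomposition supplied by Lemma~\ref{LS1} alone (which gives only $\widetilde{P}\in C^1$ with bounded derivative) is not enough, since it yields merely $\mathcal{S}'_B(Z)=O(1)$ rather than $\mathcal{S}'_B(Z)\to 0$; one really has to exploit the full $C^2$ regularity of $P$ together with the normalization $P''(0)=0$ from \eqref{cc6}. The remaining points — continuity across $Z=1$, the limit at infinity, and boundedness — are routine bookkeeping.
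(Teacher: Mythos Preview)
Your proposal is correct and follows essentially the same route as the paper: define $\mathcal{S}_B$ by the formula, rewrite $\mathcal{S}'_B$ for $Z<1$ by subtracting off the $P'(0)/Z$ singularity, and then use $P\in C^2[0,\infty)$ with $P''(0)=0$ from \eqref{cc6} to obtain $\mathcal{S}'_B(Z)\to 0$ as $Z\to 0^+$; the paper just records the limit $\mathcal{S}'_B(Z)\to \tfrac14 P''(0)=0$ via Taylor expansion, while you supply the quantitative bound $|\mathcal{S}'_B(Z)|\le \tfrac54\varepsilon(Z)$, but the content is the same. Your additional remarks on continuity across $Z=1$, the use of the Third law \eqref{cc7} for the limit at infinity, and the deduction of boundedness from finite endpoint limits are accurate and more explicit than the paper's terse argument.
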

	
\begin{proof}	 
Rewrite \eqref{cc4} in the form 
\[
\mathcal{S}'(Z) = - \frac{3}{2} \frac{ \frac{5}{3} \Big(P(Z) - P'(0) Z \Big) - \Big(P'(Z) - P'(0) \Big) Z }{Z^2} - \frac{P'(0)}{Z}.   
\]
Now, we may set 
\begin{equation} \label{S3}
	\mathcal{S}_B(Z) = \mathcal{S}(Z) + P'(0) \log(Z) \ \mbox{if}\ Z \leq 1, \ 
	\mathcal{S}_B(Z) = \mathcal{S}(Z) \ \mbox{if}\ Z > 1, 
\end{equation}
where 
\begin{equation} \label{S4}
	\mathcal{S}'_B(Z) = - \frac{3}{2}  \frac{ \frac{5}{3} \Big(P(Z) - P'(0) Z \Big) - \Big(P'(Z) - P'(0) \Big) Z }{Z^2} \to  \frac{1}{4} P''(0) = 0 \ \mbox{as}
	\ Z \to 0.
\end{equation}

\end{proof}

\subsection{Energy estimates}
\label{e}

Applying Korn and Sobolev inequalities, and using
the growth restrictions imposed through hypotheses \eqref{cc10}, \eqref{cc12}, we have 
\begin{equation} \label{D1}
\intR{	\frac{1}{\vt} \mathbb{S} (\vt, \Grad \vu) : \Grad \vu } 
\ageq \| \Grad \vu \|^2_{L^2(R^3; R^3)} = \| \vu \|^2_{D^{1,2}_0(R^3; R^3)}.
\end{equation}
Similarly, 
\begin{align} 
- \intR{ \frac{1}{\vt^2} \vc{q} \cdot \Grad \vt } &=  
\intR{ \frac{\kappa(\vt)}{\vt^2} |\Grad \vt|^2 } \ageq 
\| \Grad \vt^{\frac{3}{2}} \|^2_{L^2(R^3, R^3)} + 
\| \Grad \log(\vt) \|^2_{L^2(R^3, R^3)} \br 
&\ageq  
\| \log(\vt) - \log(\vt_\infty) \|^2_{D^{1,2}_0 (R^3)} + \| \vt - \vt_\infty \|^2_{D^{1,2}_0 (R^3) }. 
\label{D2}
\end{align}	

The choice $\tvt = \vt_\infty$ in the ballistic energy inequality \eqref{b12} yields
\begin{align}
	\frac{\D}{\dt}& \intR{ \left[ \frac{1}{2} \vr |\vu|^2 + \vr e(\vr, \vt) - 
		\vt_\infty \vr s(\vr, \vt) + \frac{a}{3} \vt_\infty^4 \right] } \dt \br
	&+ \intR{ \frac{\vt_\infty}{\vt} \left(\mathbb{S}(\vt, \Grad \vu) : \Grad \vu - \frac{\vc{q} \cdot \Grad \vt}{\vt} 
		\right)} + \lambda \intR{ \left(\frac{\vt - \vt_\infty}{\vt} \right) (\vt^4 - \vt_\infty^4)         }\dt \leq 0
	\label{D2a}
\end{align}
in $\mathcal{D}'(0, \infty)$.
In addition, 
separating the radiation components we get 
\begin{align}
	\frac{\D}{\dt}& \intR{ \left[ \left( \frac{1}{2} \vr |\vu|^2 + \vr e_m(\vr, \vt) - 
		\vt_\infty \vr s_m(\vr, \vt) \right) + \left( a \vt^4 - \vt_\infty \frac{4a}{3} \vt^3 + \frac{a}{3} \vt_\infty^4 \right) \right] } \dt \br
	&+ \intR{ \frac{\vt_\infty}{\vt} \left(\mathbb{S}(\vt, \Grad \vu) : \Grad \vu + \frac{\kappa(\vt) |\Grad \vt|^2}{\vt} 
		\right)} + \lambda \intR{ \left(\frac{\vt - \vt_\infty}{\vt} \right) (\vt^4 - \vt_\infty^4)         }\dt \leq 0
	\label{e2}
\end{align}
in $\mathcal{D}'(0, \infty)$. 

It is easy to check that 
\begin{equation} \label{e2a}
	\left(  \vt^4 - \vt_\infty \frac{4}{3} \vt^3 + \frac{1}{3} \vt_\infty^4 \right) \ageq \left\{ \begin{array}{l} (\vt - \vt_\infty)^2 
	\ \mbox{if} \ \vt_\infty/2 \leq \vt \leq 2 \vt_\infty \\ \\
	(1 + \vt^4) \ \mbox{otherwise.}\ \end{array} \right.  
\end{equation}

Unfortunately, the ballistic energy 
\[
E_B \left(\vr, \vt, \vu \Big| \vt_\infty \right)
=
\left( \frac{1}{2} \vr |\vu|^2 + \vr e_m(\vr, \vt) - 
\vt_\infty \vr s_m(\vr, \vt) \right)
\]
may become negative due to the logarithmic part of the entropy $s_m$, cf. Lemma \ref{LS2}. Instead, we consider a suitable cut--off 
\[
E_{B,k} \left(\vr, \vt, \vu \Big| \vt_\infty \right)
=
\left( \frac{1}{2} \vr |\vu|^2 + \vr e_m(\vr, \vt) - 
\vt_\infty \vr s_m(\vr, \vt) - \vt_\infty P'(0) \vr L_k(\vr) \right),
\]
where $L_k$ was introduced in \eqref{b13b}.

\begin{Lemma} \label{Le1}
	Given $k > 0$, there exists $\beta(k)$ such that 
	\begin{align}
	E_{B,k} &\left(\vr, \vt, \vu \Big| \vt_\infty \right) + \beta(k) \vr \br &=
	\left( \frac{1}{2} \vr |\vu|^2 + \vr e_m(\vr, \vt) - 
	\vt_\infty \vr s_m(\vr, \vt) - \vt_\infty P'(0) \vr L_k(\vr) \right) + \beta(k) \vr \geq 0.
	\nonumber
	\end{align}
	
\end{Lemma}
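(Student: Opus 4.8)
The plan is to show that the integrand $E_{B,k}(\vr,\vt,\vu\,|\,\vt_\infty) + \beta(k)\vr$ is pointwise nonnegative as a function of the three independent variables $(\vr,\vt,\vu)\in[0,\infty)\times(0,\infty)\times R^3$, for a suitable constant $\beta(k)$ depending only on $k$ and the structural data. Since the kinetic term $\frac12\vr|\vu|^2$ is already nonnegative and decouples, it suffices to prove
\[
\vr e_m(\vr,\vt) - \vt_\infty\,\vr s_m(\vr,\vt) - \vt_\infty P'(0)\,\vr L_k(\vr) + \beta(k)\vr \geq 0
\]
for all $\vr\ge 0$, $\vt>0$. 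Dividing by $\vr$ (the case $\vr=0$ being trivial), I must bound from below the quantity $e_m(\vr,\vt) - \vt_\infty s_m(\vr,\vt) - \vt_\infty P'(0) L_k(\vr)$, uniformly in $(\vr,\vt)$.

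First I would use the explicit forms from \eqref{cc3}, \eqref{cc4}: $e_m = \frac32\vt\frac{P(Z)}{Z}$ and $s_m = \mathcal{S}(Z)$ with $Z=\vr/\vt^{3/2}$, so that
\[
e_m(\vr,\vt) - \vt_\infty s_m(\vr,\vt) = \tfrac32\vt\,\frac{P(Z)}{Z} - \vt_\infty\,\mathcal{S}(Z).
\]
By Lemma \ref{LS2}, $\mathcal{S}(Z) = \mathcal{S}_B(Z) - P'(0)\log(Z)\mathds{1}_{Z\le 1}$ with $\mathcal{S}_B$ bounded, so $-\vt_\infty\mathcal{S}(Z) \ge -\vt_\infty\|\mathcal{S}_B\|_\infty + \vt_\infty P'(0)\log(Z)\mathds{1}_{Z\le1}$. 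The genuinely dangerous term is $\vt_\infty P'(0)\log(Z)$ on $\{Z\le1\}$, which $\to-\infty$ as $Z\to0$; this is exactly why the cut-off $L_k$ was subtracted. The key point is that on the region $Z\le 1$ we have $\vr\le\vt^{3/2}$, and I need $\log Z = \log\vr - \frac32\log\vt$ to be absorbed. The term $\log\vr$ as $\vr\to0$ is compensated by $L_k(\vr)$: recalling $L_k(\vr)=\int_1^\vr \frac{T_k(z)}{z^2}\,dz$, for $\vr\le k$ we have $L_k(\vr)=\int_1^\vr \frac{dz}{z}=\log\vr$, so $\log Z - P'(0)L_k(\vr)$ reduces (for small $\vr$) to $-\frac32\log\vt$ plus bounded terms, eliminating the $\vr\to0$ singularity. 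The remaining $\log\vt$ contribution interacts with the radiative-looking lower bound $\frac32\vt\frac{P(Z)}{Z}\ge\frac32\vt\underline{P}$ from \eqref{cc6}, and with the fact that on $Z\le1$, $\vt\ge\vr^{2/3}$.

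Concretely, I would split into cases according to the size of $Z$ and of $\vr$ relative to $k$. On $\{Z\ge1\}$: $\mathcal S$ is bounded (by \eqref{cc7}, $\mathcal S(Z)\to0$; combined with boundedness of $\mathcal S_B$), and $L_k(\vr)\le 0$ is possible but bounded below appropriately — actually here I use that $e_m\ge 0$ and $-\vt_\infty\mathcal S_m$ is bounded below, while $-\vt_\infty P'(0)L_k(\vr)$ is controlled since $L_k$ grows only like $-1/\vr$-type behavior... more carefully, $L_k(\vr) = \log\vr$ for $\vr\le k$ and $L_k(\vr)=\log k + k\int_k^\vr z^{-2}dz = \log k + 1 - k/\vr$ for $\vr>k$, which is bounded above by $\log k+1$; so $-\vt_\infty P'(0)L_k(\vr)\ge -\vt_\infty P'(0)(\log k+1)$ when $L_k\ge0$, and when $L_k(\vr)=\log\vr<0$ (i.e. $\vr<1$) the term $-\vt_\infty P'(0)\log\vr$ is positive. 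The only place $L_k$ hurts is $1\le\vr\le k$ where it contributes at most $\vt_\infty P'(0)\log k$ to the negative side — harmless, absorbed into $\beta(k)$. The real work is the region $\{Z\le1,\ \vr\le1\}$ where I combine $\frac32\vt\frac{P(Z)}{Z} + \vt_\infty P'(0)\log Z - \vt_\infty P'(0)\log\vr = \frac32\vt\frac{P(Z)}{Z} - \frac32\vt_\infty P'(0)\log\vt \ge \frac32\vt\underline P - \frac32\vt_\infty P'(0)\log\vt$, and this function of $\vt\in(0,\infty)$ is bounded below (it $\to+\infty$ as $\vt\to\infty$ and as $\vt\to0^+$ since $-\log\vt\to+\infty$), giving a finite lower bound $-c$. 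Collecting all the lower bounds over the cases, $\beta(k) = c + \vt_\infty(\|\mathcal S_B\|_\infty + P'(0)(\log k+1) + 1)$ or similar works.

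The main obstacle is bookkeeping the various regimes so that the cut-off cancellation $\log Z - P'(0)L_k(\vr) = -\frac32\log\vt$ (valid precisely when $\vr\le\min\{1,k\}$ and $Z\le1$) is exploited cleanly, while in the complementary regions one checks that every potentially-unbounded term is actually bounded or has a favorable sign. I expect no conceptual difficulty beyond this case analysis — the decomposition lemmas \ref{LS1}, \ref{LS2} and the definition of $L_k$ have been engineered exactly so that the logarithmic singularities match up; the remaining contributions are continuous functions on compact-in-$Z$ sets with controlled behavior at the endpoints, hence bounded.
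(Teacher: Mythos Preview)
Your proposal is correct and follows essentially the same route as the paper's proof: both split into the cases $Z\ge 1$ and $Z<1$, invoke the decomposition of Lemma~\ref{LS2} to isolate the logarithmic singularity of $\mathcal S$, and exploit the exact cancellation $L_k(\vr)=\log\vr$ for $\vr\le k$ together with the lower bound $\vr e_m\gtrsim\vr\vt$ to absorb the residual $-\frac32\vt_\infty P'(0)\log\vt$. Your explicit observation that $\vt\mapsto \frac32\underline{P}\,\vt-\frac32\vt_\infty P'(0)\log\vt$ is bounded below makes transparent the step the paper leaves implicit; otherwise the arguments coincide (modulo the harmless typo where you write $\log Z - P'(0)L_k(\vr)$ instead of $P'(0)(\log Z - L_k(\vr))$).
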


\begin{proof}
	
	First, consider the case 
	\begin{equation} \label{e4}
		\frac{\vr}{\vt^{\frac{3}{2}}} = Z \geq 1.
	\end{equation}
	By virtue of hypothesis \eqref{cc7}, 
	\[
	0 \leq \vr s_m (\vr, \vt) \leq \vr \mathcal{S}(1). 
	\]
	Seeing that 
	\[
	- \vr L_k(\vr) \geq - c(k) \vr 
	\]
	we conclude the desired result follows in the case \eqref{e4}. 
	
	If 
	\begin{equation} \label{e5}
		\frac{\vr}{\vt^{\frac{3}{2}}} = Z < 1, 
	\end{equation}
	we write, in accordance with the decomposition \eqref{S3a}, 
	\begin{align}
		&\left[ \frac{1}{2} \vr |\vu|^2 + \vr e_m(\vr, \vt) - 
		\vt_\infty \vr s_m(\vr, \vt) - \vt_\infty P'(0) \vr L_k(\vr) \right] \br 
		&\quad = \left[ \frac{1}{2} \vr |\vu|^2 + \vr e_m(\vr, \vt) - 
		\vt_\infty \vr \mathcal{S}_B \left( \frac{\vr}{\vt^{\frac{3}{2}}} \right) \right] + \Big[ \vt_\infty P'(0) \vr \log(\vr) - \vt_\infty P'(0) \vr L_k(\vr) \Big] \br 
		&\quad - \frac{3}{2} \vt_\infty P'(0) \vr \log(\vt).
		\nonumber
	\end{align}
	
	Now, in accordance with \eqref{S3b}, 
	\[
	\vt_\infty \vr \left| \mathcal{S}_B \left( \frac{\vr}{\vt^{\frac{3}{2}}} \right) \right| \aleq \vr.
	\]
	Moreover, by virtue of \eqref{S2a}, 
	\[
	\vr e_m (\vr, \vt) \ageq \vr^{\frac{5}{3}} + \vr \vt.
	\]
	Seeing that 
\[
\vt_\infty P'(0) \vr \log(\vr) - \vt_\infty P'(0) \vr L_k(\vr) = 0 \ \mbox{if}\ 
\vr \leq k, 
\]	
and
\[	
\log(\vt) \leq 0 \ \mbox{for}\ \vt < 1,
\]
we conclude 
	\[
	\vr e_m (\vr, \vt) + \beta(k) \vr + 
	\Big[ \vt_\infty P'(0) \vr \log(\vr) - \vt_\infty P'(0) \vr L_k(\vr) \Big] \br 
	- \frac{3}{2} \vt_\infty P'(0) \vr \log(\vt) \geq 0
	\]
	for a suitable $\beta(k)$.	
\end{proof}	 

In view of Lemma \ref{Le1}, inequality \eqref{e2} can be rewritten in the form
\begin{align}
	\frac{\D}{\dt}& \intR{ \left[ E_{B,k} \left(\vr, \vt, \vu \Big| \vt_\infty \right) + \left(a \vt^4 - \vt_\infty \frac{4a}{3} \vt^3 + \frac{a}{3} \vt_\infty^4  \right)  \right] } \br 
		&+ \frac{\D }{\dt} \intR{ \vt_\infty P'(0) \vr L_k(\vr)  }  \br
	&+ \intR{ \frac{\vt_\infty}{\vt} \left(\mathbb{S}(\vt, \Grad \vu) : \Grad \vu - \frac{\vc{q} \cdot \Grad \vt}{\vt} 
		\right)} + \lambda \intR{ \left(\frac{\vt - \vt_\infty}{\vt} \right) (\vt^4 - \vt_\infty^4)         }\dt \leq 0.
	\nonumber
\end{align}

Next, we use the relations \eqref{b13}--\eqref{b16} to deduce
\begin{align}
	\frac{\D}{\dt}& \intR{ \left[ E_{B,k} \left(\vr, \vt, \vu \Big| \vt_\infty \right) + \left(a \vt^4 - \vt_\infty \frac{4a}{3} \vt^3 + \frac{a}{3} \vt_\infty^4  \right) + \beta(k) \vr  \right] } \br 
	&+ \intR{ \frac{\vt_\infty}{\vt} \left(\mathbb{S}(\vt, \Grad \vu) : \Grad \vu - \frac{\vc{q} \cdot \Grad \vt}{\vt} 
		\right)} + \lambda \intR{ \left(\frac{\vt - \vt_\infty}{\vt} \right) (\vt^4 - \vt_\infty^4)         }\dt\br
	&\leq 	\intR{ \vt_\infty P'(0) T_k(\vr) \Div \vu } \leq 
	\vt_\infty P'(0) \| \Div \vu \|_{L^2(R^3)} m_0 \sqrt{k}.
	\label{e6}
\end{align}
	
Finally, integrating \eqref{e6} in time we may infer that  
\[
\limsup_{T \to \infty} \frac{1}{T} \int_1^T \intR{ \frac{1}{\vt} \left[ \left(\mathbb{S}(\vt, \Grad \vu) : \Grad \vu + \frac{\kappa(\vt) |\Grad \vt|^2}{\vt} 
	\right) + (\vt - \vt_\infty)  (\vt^4 - \vt_\infty^4)      \right]   }\dt
\aleq k 	 
\]
for any $k > 0$. As $k > 0$ was arbitrary, we get 
\begin{equation} \label{e7}
\lim_{T \to \infty} \frac{1}{T} \int_1^T \intR{ \frac{1}{\vt} \left[ \left(\mathbb{S}(\vt, \Grad \vu) : \Grad \vu + \frac{\kappa(\vt) |\Grad \vt|^2}{\vt} 
	\right) + (\vt - \vt_\infty)  (\vt^4 - \vt_\infty^4)      \right]   }\dt
= 0.	 
\end{equation}
The convergence stated in \eqref{e7} together with \eqref{D1}, \eqref{D2} yield 
\begin{equation} \label{e8}
	\lim_{T \to \infty} \frac{1}{T} \int_1^T \left[  \| \vt - \vt_\infty \|_{W^{1,2}(R^3)}^2 + \| \vu \|_{D^{1,2}_0 (R^3; R^3)}^2 \right] = 0.
	\end{equation}
	
\begin{Remark} \label{eR1}
A short inspection of the proof of \eqref{e8} reveals that a slightly weaker result, specifically, 
\begin{equation} \label{e9}
	\lim_{T \to \infty} \frac{1}{T} \int_1^T \left[  \| \vt - \vt_\infty \|_{D^{1,2}_0(R^3)}^2 + \| \vu \|_{D^{1,2}_0 (R^3; R^3)}^2 \right] = 0,
\end{equation}
holds in the absence of radiation terms - $a = \lambda = 0$.	
	\end{Remark}	

\subsection{Pressure estimates}
\label{p}

The next goal is to use 
\begin{equation} \label{p1}
\bfphi = \Grad \Del^{-1} [b(\vr)] 
\end{equation}
as a test function in the momentum balance \eqref{b10} for a suitable smooth function $b(\vr)$.
To this end, we replace the pressure $p(\vr, \vt)$ in the weak formulation 
of the momentum balance \eqref{b9} by  
\[
p(\vr, \vt) = p_m (\vr, \vt) + \frac{a}{3} \vt^4 - \frac{a}{3} \vt^4_\infty. 
\]
Writing 
\[
\vt^4 - \vt^4_\infty = \vt^4 - 4 \vt^3_\infty (\vt - \vt_\infty) -\vt^4_\infty
+ 4 \vt^3_\infty (\vt - \vt_\infty)
\]
we observe that the pressure belongs to the space $L^1 + L^2$ as the total energy 
is bounded. This observation justifies the following identity 
\begin{align}
	\int_{\tau_1}^{\tau_2} &\intR{ \left( p_m(\vr, \vt)  + \vr |\vu|^2 
	+ \frac{a}{3} \vt^4 - \frac{a}{3} \vt^4_\infty	 \right) b(\vr) } \dt = - \sum_{i \ne j} \int_{\tau_1}^{\tau_2} \intR{ \vr u_i u_j \partial_{x_i} \Del^{-1} \partial_{x_j} [b(\vr)] } \dt	\br 
	&+ \int_{\tau_1}^{\tau_2} \intR{ \mathbb{S}(\vt ,\Grad \vu) : \Grad \Grad \Del^{-1}[b(\vr)] } \dt - \left[ \intR{ \vr \vu \cdot \Grad \Del^{-1}[b(\vr)] } 
	\right]_{t = \tau_1}^{t = \tau_2} \br 
	&+ \int_{\tau_1}^{\tau_2} \intR{ \vr \vu \cdot \Grad \Del^{-1}[ \partial_t b(\vr) ] } \dt,\ 0 < \tau_1 < \tau_2,
	\label{p2}	
\end{align}
obtained by using $\bfphi$ defined in \eqref{p1} as a test function in \eqref{b10} as long as 
\[
b(\vr) \in L^\infty \cap L^2 (R^3)
\]
and all integrals on the right--hand side of \eqref{p2} converge.

Using H\" older's inequality we get   
\begin{align}
&\left| \int_{\tau_1}^{\tau_2} \intR{ \vr u_i u_j \partial_{x_i} \Del^{-1} \partial_{x_j} [b(\vr)] } \dt \right| \br &\quad \leq \sup_{t \in (\tau_1, \tau_2)}
\| \vr \|_{L^{\frac{5}{3}}(R^3)} \int_{\tau_1}^{\tau_2}\| \vu \|^2_{L^6(R^3; R^3)}
\dt 
\sup_{t \in (\tau_1, \tau_2)} \Big\| \Grad \Del^{-1} \Grad [b(\vr)] \Big\|_{L^{15}(R^3)}.
\label{p3}
\end{align}
Similarly, 
\begin{align}
&\left|	\int_{\tau_1}^{\tau_2} \intR{ \mathbb{S}(\vt, \Grad \vu) : \Grad \Grad \Del^{-1}[b(\vr)] } \dt \right| \br
&\quad \aleq \int_{\tau_1}^{\tau_2} \intR{ \Big( |\vt - \vt_\infty| |\Grad \vu| + 
	|\Grad \vu| \Big) |\Grad \Grad \Del^{-1}[b(\vr)]| } \dt 
\br
&\quad \aleq \int_{\tau_2}^{\tau_2} \| \vt - \vt_\infty\|_{L^6(R^3)} 
\| \Grad \vu \|_{L^2(R^3; R^{3 \times 3})}  \dt \sup_{t \in (\tau_1, \tau_2)} \Big\| \Grad \Del^{-1} \Grad [b(\vr)] \Big\|_{L^{3}(R^3)}
\br
&\quad +  \int_{\tau_1}^{\tau_2} \| \Grad \vu \|_{L^2(R^3; R^{3 \times 3})}  \dt \sup_{t \in (\tau_1, \tau_2)} \Big\| \Grad \Del^{-1} \Grad [b(\vr)] \Big\|_{L^{2}(R^3)},
\label{p4}	
	\end{align}
and	
\begin{align} \label{p5}
\left|	\left[ \intR{ \vr \vu \cdot \Grad \Del^{-1}[b(\vr)] } 
	\right]_{t = \tau_1}^{t = \tau_2} \right| \leq
\sup_{t \in (\tau_1, \tau_2)} \intR{ \vr |\vu| } 
 \sup_{t \in (\tau_1, \tau_2)} \Big\| \Grad \Del^{-1} [b(\vr)] \Big\|_{L^{\infty}(R^3)}.
	\end{align}	
	
As the total mass $m_0$ is finite, meaning $\| \vr(t, \cdot) \|_{L^1(R^3)} \aleq 1$ uniformly for $t > 0$, we can choose 
\begin{equation} \label{p5c}
b(\vr) = \min \left\{ \vr , \vr^{\frac{1}{\alpha}} \right\} ,\ \alpha > 0. 
\end{equation}
The standard elliptic estimates yield 	
\begin{align} 
\| \Grad \Del^{-1} \Grad b(\vr) (t, \cdot) \|_{L^q(R^3)} \aleq 1 
\ \mbox{for all}\ t  > 0,\ \mbox{and any}\  1 < q < \alpha , \label{p5a} \\
\| \Grad \Del^{-1} b(\vr) (t, \cdot) \|_{L^r (R^3, R^3)} \aleq 1
\ \mbox{for all}\ t > 0, \ \mbox{and any}\ \frac{3}{2} < r \leq \infty 
\ \mbox{as soon as}\ \alpha > 3.
\label{p5b}
\end{align}
		
Thus if $b(\vr)$ is given by \eqref{p5a} 
we conclude	
\begin{align} 
	&\int_{\tau_1}^{\tau_2} \intR{ \left( p_m(\vr, \vt)  + \vr |\vu|^2 
	+ \frac{a}{3} \vt_4 - \frac{4a}{3} \vt_\infty^3 (\vt - \vt_\infty) - \frac{a}{3} \vt^4_\infty	 \right) b(\vr) } \dt \br
	& \leq c(m_0) \left( \sup_{t \in (\tau_1, \tau_2)}
	\| \vr \|_{L^{\frac{5}{3}}(R^3)} \int_{\tau_1}^{\tau_2}\| \vu \|^2_{L^6(R^3; R^3)}   \dt + \sup_{t \in (\tau_1, \tau_2)} \intR{ \vr |\vu| } \right)	\br
&+ c(m_0) \left(  \int_{\tau_1}^{\tau_2} \| \Grad \vu \|_{L^2(R^3; R^{3 \times 3})} \dt +     \int_{\tau_1}^{\tau_2} \| \Grad \vu \|^2_{L^2(R^3; R^{3 \times 3})} \dt +  \int_{\tau_1}^{\tau_2} \| \vt - \vt_\infty \|^2_{L^6(R^3)} \dt                                      \right) \br	
&+ \left| \int_{\tau_1}^{\tau_2} \intR{ \vr \vu \cdot \Grad \Del^{-1}[ \partial_t b(\vr) ] } \dt \right|.
	\label{p6}
\end{align}

Finally, we use the renormalized equation of continuity to handle the last term 
in \eqref{p6}:
\begin{align}
\int_{\tau_2}^{\tau_2} &\intR{ \vr \vu \cdot \Grad \Del^{-1}[ \partial_t b(\vr) ] } \dt = - \int_{\tau_1}^{\tau_2} \intR{ \vr \vu \cdot \Grad \Del^{-1} \Div[  b(\vr) \vu ]  } \dt \br
&+ \int_{\tau_1}^{\tau_2} \intR{ \vr \vu \cdot \Grad \Del^{-1} \left[ \left( b(\vr) - b'(\vr) \vr \right) \Div \vu  \right]  } \dt. 
\nonumber
\end{align}
Since $b'(\vr) \vr \approx b(\vr)$ (except $\vr = 1$), 
both integrals on the right--hand side can be estimated by means of H\" older's inequality and the standard Sobolev embedding theorem in the same way as in \eqref{p4}. Thus 
we may infer that 
\begin{align} 
	&\int_{\tau_1}^{\tau_2} \intR{ \left( p_m(\vr, \vt)  + \vr |\vu|^2 
		+ \frac{a}{3} \vt_4  - \frac{4a}{3} \vt_\infty^3 (\vt - \vt_\infty) - \frac{a}{3} \vt^4_\infty	 \right) b(\vr) } \dt \br
	& \leq c(m_0) \left( \sup_{t \in (\tau_1, \tau_2)}
	\| \vr \|_{L^{\frac{5}{3}}(R^3)} \int_{\tau_1}^{\tau_2}\| \vu \|^2_{L^6(R^3; R^3)}   \dt + \sup_{t \in (\tau_1, \tau_2)} \| \sqrt{\vr} \vu \|_{L^2(R^3; R^3)} \right)	\br
	&+ c(m_0) \left(  \int_{\tau_1}^{\tau_2} \| \Grad \vu \|_{L^2(R^3; R^{3 \times 3})} \dt +     \int_{\tau_1}^{\tau_2} \| \Grad \vu \|^2_{L^2(R^3; R^{3 \times 3})} \dt +  \int_{\tau_1}^{\tau_2} \| \vt - \vt_\infty \|^2_{L^6(R^3)} \dt  \right). 
	\label{p7}
\end{align}
where we have used 
\[
\intR{ \vr |\vu| } \leq \| \sqrt{\vr} \|_{L^2(R^3)} 
 \| \sqrt{\vr} \vu \|_{L^2(R^3; R^3)} = \sqrt{m_0}  \| \sqrt{\vr} \vu \|_{L^2(R^3; R^3)}.
\]	

\subsection{Auxiliary inequalities}

We recall the inequality 
\begin{align}
	&\int_{\tau_1}^{\tau_2} \intR{ \vr |\log (\vt) | } \dt 
	\leq \int_{\tau_1}^{\tau_2} \intR{ \vr |\log (\vt) - \log(\vt_\infty) | } \dt
	+ \int_{\tau_1}^{\tau_2} \intR{ \vr |\log (\vt_\infty) | } \dt \br
&\quad \leq \sup_{t \in (\tau_1, \tau_2)} \| \vr \|_{L^{\frac{6}{5}}(R^3)} 
\int_{\tau_1}^{\tau_2} \| \log (\vt) - \log(\vt_\infty) \|_{L^6(R^3)} \dt
	+ \int_{\tau_1}^{\tau_2} \intR{ \vr |\log (\vt_\infty) | } \dt.
	\label{p8}
\end{align}
Similarly, 
\begin{align}
	&\int_{\tau_1}^{\tau_2} \intR{ \vr \vt } \dt 
	\leq \int_{\tau_1}^{\tau_2} \intR{ \vr |\vt - \vt_\infty | } \dt
	+ \int_{\tau_1}^{\tau_2} \intR{ \vr \vt_\infty } \dt \br
	&\quad \leq \sup_{t \in (\tau_1, \tau_2)} \| \vr \|_{L^{\frac{6}{5}}(R^3)} 
	\int_{\tau_1}^{\tau_2} \| \vt - \vt_\infty \|_{L^6(R^3)} \dt
	+ \int_{\tau_1}^{\tau_2} \intR{ \vr \vt_\infty  } \dt, 
	\label{p9}
\end{align}
and 
\begin{equation}
	\int_{\tau_1}^{\tau_2} \intR{ \vr |\vu|^2 } \dt 
	\quad \leq \sup_{t \in (\tau_1, \tau_2)} \| \vr \|_{L^{\frac{3}{2}}(R^3)} 
	\int_{\tau_1}^{\tau_2} \| \vu \|_{L^6(R^3)}^2 . 
	\label{p10}
\end{equation}

\section{Bounded absorbing set}
\label{P}

Going back to \eqref{e6}, we introduce the truncated energy 
\begin{align} 
F_k (\vr, \vt, \vu) &= 
E_{B,k} \left( \vr, \vt, \vu \Big| \vt_\infty \right) + \beta(k) \vr 
+  a \vt^4 - \vt_\infty \frac{4a}{3} \vt^3 + \frac{a}{3} \vt_\infty^4 \br &=
\frac{1}{2} \vr |\vu|^2 + \vr e_m(\vr, \vt) - 
\vt_\infty \vr s_m(\vr, \vt) - \vt_\infty P'(0) \vr L_k(\vr) + \beta(k) \vr \br 
&\quad + a \vt^4 - \vt_\infty \frac{4a}{3} \vt^3 + \frac{a}{3} \vt_\infty^4 .
\label{P1} 
\end{align}
As stated in Lemma \ref{Le1}, the function $F_k (\vr, \vt, \vu) \geq 0$. Our goal is to show that 
\begin{equation} \label{P1a}
	\limsup_{t \to \infty} \intR{ F_k (\vr, \vt, \vu) } \leq F_{\infty}(m_0, k)
\end{equation}
for any global in time weak solution of the NSF system. 

We start with an auxiliary estimate. 
\begin{Lemma} \label{LP2}
There is $\omega(k) > 0$ such that 
\[
\omega(k) \left( \vr^{\frac{5}{3}} + \vr + \vr \vt + \vr |\log (\vt) | \right) 
\geq E_{B,k} 
\]	
	\end{Lemma}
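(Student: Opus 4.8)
The plan is to prove the asserted inequality as a \emph{pointwise} bound, by expanding $E_{B,k}$ along the structural decompositions of Lemmas \ref{LS1} and \ref{LS2} and controlling the troublesome logarithmic terms by a short case analysis. Write
\[
E_{B,k}\left(\vr,\vt,\vu\,\Big|\,\vt_\infty\right)=\frac12\vr|\vu|^2+G_k(\vr,\vt),\qquad G_k(\vr,\vt):=\vr e_m(\vr,\vt)-\vt_\infty\vr s_m(\vr,\vt)-\vt_\infty P'(0)\vr L_k(\vr);
\]
the kinetic term is non--negative and independent of the quantities on the right, so the substance of the Lemma is the bound $G_k\le c(k)\big(\vr^{5/3}+\vr+\vr\vt+\vr|\log\vt|\big)$ (the kinetic part staying on the left, or being transferred to the right at no cost). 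First I would dispose of the two ``regular'' pieces. With $Z=\vr\vt^{-3/2}$, the splitting $P(Z)=\Ov{p}Z^{5/3}+\widetilde P(Z)$ of Lemma \ref{LS1} together with $0\le\widetilde P(Z)\le cZ$ gives
\[
\vr e_m(\vr,\vt)=\tfrac32\vr\vt\frac{P(Z)}{Z}=\tfrac32\Ov{p}\,\vr^{5/3}+\tfrac32\vr\vt\frac{\widetilde P(Z)}{Z}\aleq\vr^{5/3}+\vr\vt,
\]
which is the upper half of \eqref{S2a}; and the decomposition $\mathcal{S}(Z)=\mathcal{S}_B(Z)-P'(0)\log(Z)\mathds{1}_{Z\le1}$ of Lemma \ref{LS2}, with $\mathcal{S}_B$ bounded, gives
\[
-\vt_\infty\vr s_m(\vr,\vt)=-\vt_\infty\vr\mathcal{S}_B(Z)+\vt_\infty P'(0)\,\vr\log(Z)\mathds{1}_{Z\le1},\qquad|\vt_\infty\vr\mathcal{S}_B(Z)|\aleq\vr .
\]

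The crux is then to bound from above the residual term $R:=\vt_\infty P'(0)\big(\vr\log(Z)\mathds{1}_{Z\le1}-\vr L_k(\vr)\big)$. Here I would use $\log Z=\log\vr-\tfrac32\log\vt$ and the explicit form of $L_k$: $L_k(\vr)=\log\vr$ for $\vr\le k$, while $L_k(\vr)=\log k+1-k/\vr$ for $\vr>k$, so that $\vr L_k(\vr)=\vr\log\vr$ on $\{\vr\le k\}$ and $|\vr L_k(\vr)|\le c(k)\vr$ on $\{\vr>k\}$. A short case analysis finishes the estimate. On $\{\vr\le k\}\cap\{Z\le1\}$ the contributions $\vr\log\vr$ cancel and $R=-\tfrac32\vt_\infty P'(0)\vr\log\vt\aleq\vr|\log\vt|$. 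On $\{\vr\le k\}\cap\{Z>1\}$ one has $R=-\vt_\infty P'(0)\vr\log\vr$, which is $\le0$ when $\vr\ge1$, whereas when $\vr<1$ the constraint $Z>1$ forces $\vt^{3/2}<\vr$, hence $0\le-\log\vr\le\tfrac32|\log\vt|$ and again $R\aleq\vr|\log\vt|$. On $\{\vr>k\}$ one has $|\vr L_k(\vr)|\le c(k)\vr$ and $\vr\log(Z)\mathds{1}_{Z\le1}\le(\vr\log\vr)^++\tfrac32\vr|\log\vt|$ with $(\vr\log\vr)^+\aleq\vr^{5/3}$ (since $\log\vr\aleq\vr^{2/3}$ for $\vr\ge1$ and $(\vr\log\vr)^+=0$ for $\vr\le1$), so $R\le c(k)(\vr^{5/3}+\vr+\vr|\log\vt|)$. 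Combining $R$ with the bounds for $\vr e_m$ and $-\vt_\infty\vr\mathcal{S}_B$ yields $G_k\le\omega(k)(\vr^{5/3}+\vr+\vr\vt+\vr|\log\vt|)$ for a suitable $\omega(k)>0$, which is the assertion.

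The main --- indeed essentially the only --- delicate point is the low--temperature regime $\vr$ small with $Z=\vr\vt^{-3/2}>1$: there the singular contribution $\vr|\log\vr|$ is controlled neither by the internal energy nor by the bounded part of the entropy, and one must exploit $\vt^{3/2}<\vr$ (which is exactly $Z>1$) to trade $|\log\vr|$ for $|\log\vt|$. Everything else is the bookkeeping of the cancellation between $\vr L_k(\vr)$ and the logarithmic part of the entropy on $\{\vr\le k\}$, a cancellation built into the very definition \eqref{b13b} of $L_k$; once established, the pointwise inequality will feed directly into the proof of the absorbing estimate \eqref{P1a} after integration over $R^3$ and combination with the pressure and auxiliary bounds of Section \ref{p}.
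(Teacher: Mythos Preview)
Your argument is correct and follows essentially the same route as the paper: both proofs invoke the decompositions of Lemmas~\ref{LS1}--\ref{LS2}, bound $\vr e_m$ and the $\mathcal{S}_B$ contribution directly, and then dispose of the residual logarithmic piece by the same case analysis, the decisive observation in each being that $Z>1$ together with $\vr<1$ forces $|\log\vr|\le\tfrac32|\log\vt|$. The only difference is organizational---you split first on $\{\vr\le k\}$ versus $\{\vr>k\}$ while the paper splits first on $\{Z\le1\}$ versus $\{Z>1\}$---and, like the paper's own proof, your treatment tacitly sets aside the kinetic term $\tfrac12\vr|\vu|^2$, which is not controlled by the right--hand side and is handled separately in the application via \eqref{p10}.
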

	
\begin{proof}
It is enough to show that 
\[
\omega(k) \left( \vr^{\frac{5}{3}} + \vr + \vr \vt + \vr |\log (\vt) | \right) 
\geq 2 \vt_\infty \left| \vr s_m(\vr, \vt) + P'(0) \vr L_k(\vr) \right| 
\]	
for a suitable $\omega(k) > 0$.

First suppose that 
\[
\frac{\vr}{\vt^{\frac{3}{2}}} = Z \leq 1. 
\]	
By virtue of Lemma \ref{LS2}, 
\[
\vr s_m (\vr, \vt) + P'(0) \vr L_k(\vr)= \vr \mathcal{S}_B \left( \frac{\vr}{\vt^{\frac{3}{2}}} \right) + \frac{3}{2} P'(0) \vr \log(\vt) + P'(0) \vr L_k(\vr)
-P'(0) \vr \log(\vr). 
\]
As $\mathcal{S}_B$ is bounded and 
\[
\Big|  P'(0) \vr L_k(\vr)
-P'(0) \vr \log(\vr) \Big| \leq c(k) \left( \vr + \vr^{\frac{5}{3}} \right),
\]
the desired result follows. 

If 
\begin{equation} \label{P2a}
\frac{\vr}{\vt^{\frac{3}{2}}} = Z \geq 1, 
\end{equation}
we have 
\[
0 \leq s_m (\vr, \vt) \leq \mathcal{S}(1). 
\]
Finally, 
\[
\vr |L_k(\vr)| \leq - \vr \log(\vr) \ \mbox{if}\ \vr \leq 1 , 
\vr |L_k(\vr)| \aleq \vr \ \mbox{if}\ \vr > 1, 
\]
where, in accordance with \eqref{P2a}, 
\[
- \log(\vr) \leq - \frac{3}{2} \log(\vt) \leq \frac{3}{2} |\log(\vt)|, 
\]
which completes the proof.

	\end{proof}	
	
The following result is crucial for the existence of a bounded absorbing set.	
\begin{Lemma} \label{LP1}	
Let $k > 0$ be given. Suppose
	\begin{equation} \label{P2}
		\intR{ F_k(\vr, \vt, \vu) (\tau, \cdot) }
		- \intR{ F_k(\vr, \vt, \vu) (\tau +1 , \cdot) }
		\leq K.
	\end{equation}	

	Then there exists a constant $L(K,k, m_0)$ such that 
	\begin{equation} \label{P3}
		\sup_{t \in (\tau, \tau + 1)} \intR{ F_k (\vr, \vt, \vu ) (t, \cdot) } 
		\leq L (K,k, m_0).
	\end{equation}

\end{Lemma}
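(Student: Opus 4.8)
The plan is to combine the differential inequality \eqref{e6} with the pressure estimates of Section \ref{p} and the auxiliary comparison Lemma \ref{LP2}, run on the unit time window $(\tau, \tau+1)$. First I would integrate \eqref{e6} over $(\tau, t)$ for $t \in (\tau, \tau+1)$; using the sign of the radiative terms and the hypothesis \eqref{P2}, this controls
\[
\sup_{t \in (\tau, \tau+1)} \intR{ F_k(\vr, \vt, \vu)(t, \cdot) } + \int_\tau^{\tau+1} \left( \| \vu \|_{D^{1,2}_0}^2 + \| \vt - \vt_\infty \|_{W^{1,2}(R^3)}^2 \right) \dt
\]
by $K$ plus the bad term on the right of \eqref{e6}, namely $\vt_\infty P'(0) m_0 \sqrt{k} \int_\tau^{\tau+1} \| \Div \vu \|_{L^2(R^3)} \dt$. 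By Cauchy--Schwarz the latter is $\aleq \sqrt{k}\, m_0 \big(\int_\tau^{\tau+1} \| \vu \|_{D^{1,2}_0}^2 \dt\big)^{1/2}$, which by Young's inequality can be absorbed into a small multiple of $\int_\tau^{\tau+1} \| \vu \|_{D^{1,2}_0}^2 \dt$ on the left, up to an additive constant $c(k,m_0)$. So at this stage we obtain a bound on $X := \sup_{t \in (\tau,\tau+1)} \intR{F_k(\vr,\vt,\vu)(t,\cdot)}$ and on $Y := \int_\tau^{\tau+1}(\| \vu \|_{D^{1,2}_0}^2 + \| \vt - \vt_\infty \|_{W^{1,2}}^2)\dt$ of the form $X + Y \leq c(K,k,m_0)\big(1 + \sqrt{Y}\big)$ — which is not yet closed, because the $\sqrt{Y}$ came out at no cost, but $X$ and $Y$ are still linked only one way.

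The missing ingredient is to bound the time-integral of the pressure, and hence $\sup \| \vr \|_{L^{5/3}}$-type quantities, in terms of $X$ and $Y$. For this I would invoke the pressure estimate \eqref{p7} with $\tau_1 = \tau$, $\tau_2 = \tau+1$ and $b(\vr) = \min\{\vr, \vr^{1/\alpha}\}$ for a fixed $\alpha > 3$. Since $p_m(\vr,\vt) b(\vr) \ageq \vr^{5/3} b(\vr)$ and $b(\vr) \sim \vr$ for small $\vr$ (the regime that matters near infinity), the left side of \eqref{p7} controls $\int_\tau^{\tau+1} \intR{\vr^{5/3} \min\{\vr,\vr^{1/\alpha}\}}\dt$ together with the analogous radiative contributions; the right side of \eqref{p7} is, by \eqref{p5a}--\eqref{p5b} and $\| \vr \|_{L^1} = m_0$, bounded by $c(m_0)$ times
\[
\sup_{(\tau,\tau+1)} \| \vr \|_{L^{5/3}} \cdot \int_\tau^{\tau+1} \| \vu \|_{L^6}^2 \dt + \sup_{(\tau,\tau+1)} \| \sqrt{\vr}\, \vu \|_{L^2} + \int_\tau^{\tau+1}\!\big( \| \Grad \vu \|_{L^2} + \| \Grad \vu \|_{L^2}^2 + \| \vt - \vt_\infty \|_{L^6}^2 \big)\dt.
\]
Every term here is already controlled: $\sup \| \sqrt{\vr}\,\vu \|_{L^2}^2 \leq 2X$ (it is part of $F_k$), $\int \| \Grad \vu \|_{L^2}^2 \leq Y$ and $\int \| \Grad \vu \|_{L^2} \leq 1 + Y$, $\int \| \vt - \vt_\infty \|_{L^6}^2 \leq c\, Y$ by Sobolev, and $\sup \| \vr \|_{L^{5/3}}^{5/3}$ is estimated via Lemma \ref{LP2} and interpolation between $L^1$ (mass $m_0$) and the pressure/energy bound — concretely $\| \vr \|_{L^{5/3}}$ is dominated by a quantity $\aleq 1 + X$ (through $\vr^{5/3} \aleq \vr e_m \aleq F_k + \vt_\infty P'(0)\vr L_k(\vr) + \ldots$, the extra $\vr L_k(\vr)$ being absorbed by mass), and $\int \| \vu \|_{L^6}^2 \leq c\,Y$. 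Feeding these back gives a single scalar inequality of the form $X + Y \leq c(K,k,m_0)\big(1 + X^{1/2} Y + X^{1/2} + Y\big)$ after the appropriate interpolations, with the quadratic cross term being genuinely the delicate point.

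The main obstacle is precisely closing this coupled inequality: the term $\sup \| \vr \|_{L^{5/3}} \int \| \vu \|_{L^6}^2 \dt$ mixes the sup-in-time energy $X$ with the time-integrated dissipation $Y$ multiplicatively, so a naive Young splitting would reproduce $X$ with a possibly large constant. The resolution I anticipate is the standard continuity/bootstrap argument on the length of the interval: replace $(\tau,\tau+1)$ by $(\tau, \tau+s)$, note that the dissipation integral $Y(s) \to 0$ as $s \to 0^+$ while $X(s)$ starts from the finite value $\intR{F_k(\vr,\vt,\vu)(\tau,\cdot)}$, which itself is $\leq K + \intR{F_k(\tau+1,\cdot)}$ — and here one uses \eqref{P2} once more together with the nonnegativity $F_k \geq 0$ to see $\intR{F_k(\tau,\cdot)}$ need not be assumed finite a priori; rather, one argues it is finite by the same estimate run backward, or one simply takes it as part of the weak-solution regularity. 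Choosing $s_0$ so small that $c(K,k,m_0)\,Y(s_0) \leq \tfrac12$, the cross term is absorbed and one gets a uniform bound on $(\tau,\tau+s_0)$; covering $(\tau,\tau+1)$ by finitely many (a number depending only on $s_0$, hence only on $K, k, m_0$) such subintervals and iterating yields \eqref{P3} with $L(K,k,m_0)$ depending only on the stated quantities. One must check that the constant does not degrade along the finitely many iterations, which is routine since each step multiplies by a fixed factor.
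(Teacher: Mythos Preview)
Your first step is where the argument breaks. Integrating \eqref{e6} from $\tau$ to $\tau+1$ and invoking \eqref{P2} gives you control of the dissipation
\[
Y \;=\; \int_\tau^{\tau+1}\left(\|\vu\|_{D^{1,2}_0}^2 + \|\vt-\vt_\infty\|_{W^{1,2}}^2\right)\dt \;\leq\; c(K,k,m_0),
\]
but it does \emph{not} control $X = \sup_{(\tau,\tau+1)}\intR{F_k}$: integrating \eqref{e6} from $\tau$ to an intermediate $t$ leaves the unknown initial value $\intR{F_k(\tau,\cdot)}$ on the right, and \eqref{P2} only bounds this by $K + \intR{F_k(\tau+1,\cdot)}$, which is equally unknown. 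Your bootstrap does not repair this: the small-time parameter $s_0$ you would need (making the cross term $\sup\|\vr\|_{L^{5/3}}\int\|\vu\|_{L^6}^2$ absorbable) depends on the particular trajectory, and even if you fix $s_0$, iterating over $\sim 1/s_0$ subintervals multiplies the starting value $\intR{F_k(\tau,\cdot)}$ by a fixed factor at each step, so the final bound still depends on that value rather than only on $(K,k,m_0)$.

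The paper closes the loop differently, and the mechanism is worth noting. After obtaining $Y \leq c(K,k,m_0)$ exactly as above, the pressure estimate \eqref{p7} together with the auxiliary inequalities \eqref{p8}--\eqref{p10} and Lemma~\ref{LP2} are used to bound the \emph{time integral} of the energy, and the point is that the bound is \emph{sublinear} in $X$:
\[
\int_\tau^{\tau+1}\intR{F_k}\dt \;\leq\; c(K,k,m_0)\left(X^{3/5} + X^{1/2} + 1\right),
\]
the exponents coming from $\sup\|\vr\|_{L^{5/3}} \leq X^{3/5}$ and $\sup\|\sqrt{\vr}\,\vu\|_{L^2}\leq X^{1/2}$. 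The second ingredient is a sup--to--integral comparison: by the mean value theorem there is $\xi\in[\tau,\tau+1]$ with $\intR{F_k(\xi,\cdot)} = \int_\tau^{\tau+1}\intR{F_k}\dt$, and then \eqref{e6} (forward from $\xi$) together with \eqref{P2} (to handle the segment before $\xi$) give
\[
X \;\leq\; c(K,k,m_0)\left(1 + \int_\tau^{\tau+1}\intR{F_k}\dt\right).
\]
Combining the two displays yields $X \leq c(K,k,m_0)(1 + X^{3/5} + X^{1/2})$, which is self-closing. The sublinearity is precisely what makes a continuity/bootstrap argument unnecessary; your approach treats the cross term as if it were linear in $X$, which is why you are forced into a bootstrap that cannot be made uniform.
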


\begin{proof}
	
First observe that hypothesis \eqref{P2} together with the modified energy inequality \eqref{e6} yield 
\begin{equation} \label{P4}
\int_{\tau}^{\tau + 1} \intR{ \left( |\Grad \vu|^2 + \frac{\kappa(\vt) |\Grad \vt|^2}{\vt^2} 
	\right)}\dt + \int_{\tau}^{\tau + 1} \intR{ \left(\frac{\vt - \vt_\infty}{\vt} \right) (\vt^4 - \vt_\infty^4)         }\dt 	\leq c(m_0, k, K).
\end{equation}		
In particular, applying Poincar\' e and Sobolev inequality, we deduce 
\begin{align} \label{P5}
	\int_{\tau}^{\tau + 1} &\left( \| \vu \|^2_{L^6(R^3;R^3)} + \| \Grad \vu \|^2_{L^2(R^3; R^{3 \times 3})} + \| \vt - \vt_\infty \|^2_{L^6(R^3)} + \| \log(\vt) - \log(\vt_\infty) \|^2_{L^6(R^3)}
		\right)\dt \\ 	&\leq c(m_0, k, K) \nonumber.
\end{align}	

In the present setting, the pressure estimates stated in \eqref{p7} imply 
\begin{align} 
	&\int_{\tau}^{\tau + 1} \intR{ \vr^{\frac{5}{3}} b(\vr) } \dt \br
	& \leq c(m_0,k,K) \left( \sup_{t \in (\tau, \tau + 1)}
	\| \vr \|_{L^{\frac{5}{3}}(R^3)}  + \sup_{t \in (\tau, \tau + 1)} \| \sqrt{\vr} \vu \|_{L^2(R^3; R^3)} + 1 \right), 
	\label{P6}
\end{align}
where $b(\vr) = \left\{ \vr, \vr^{\frac{1}{\alpha}} \right\},\ \alpha > 3.$
As $\vr^{\frac{5}{3}} b(\vr) \leq \vr$ for $\vr \leq 1$, we easily deduce from 
\eqref{P6}: 
\begin{align} 
	&\int_{\tau}^{\tau + 1} \intR{ \vr^{\frac{5}{3}}  } \dt \br
	& \leq c(m_0,k,K) \left( \sup_{t \in (\tau, \tau + 1)}
	\| \vr \|_{L^{\frac{5}{3}}(R^3)}  + \sup_{t \in (\tau, \tau + 1)} \| \sqrt{\vr} \vu \|_{L^2(R^3; R^3)} + 1 \right). 
	\label{P6a}
\end{align}

Using the bounds \eqref{P4}, \eqref{P5}, together with Lemma \ref{LP2}, 
we may infer that  
\begin{align} 
	&\int_{\tau}^{\tau + 1} \intR{ F_k(\vr, \vt, \vu)  } \dt \br
	& \leq c(m_0,k,K) \left( \left( \sup_{t \in (\tau, \tau + 1)} \intR{ F_k(\vr, \vt, \vu)  } \right)^{\frac{3}{5}} + \left( \sup_{t \in (\tau, \tau + 1)} \intR{ F_k(\vr, \vt, \vu)  } \right)^{\frac{1}{2}}
	 + 1 \right).
	\label{P7}
\end{align}

Finally we claim that hypothesis \eqref{P2} implies 
\begin{equation} \label{P10}
	\sup_{t \in (\tau, \tau + 1)} 	\intR{ F_k(\vr, \vt, \vu) (t, \cdot) } \leq 
	c(m_0,k,K) \left(1 + \int_{\tau}^{\tau + 1} \intR{ F_k (\vr, \vt, \vu) } \dt \right).  
\end{equation}
Indeed, by the mean value theorem, there is $\xi \in [\tau, \tau + 1]$ such that 
\[
\int_{\tau}^{\tau + 1} \intR{ F_k(\vr, \vt, \vu) }
= \intR{ F_k(\vr, \vt, \vu) (\xi, \cdot) }.
\]	
Now, it follows from \eqref{e6} that 
\begin{equation} \label{P11}
\intR{ F_k(\vr, \vt, \vu) (t, \cdot) } 
\leq \int_{\tau}^{\tau + 1} \intR{ F_k(\vr, \vt, \vu) } + c(m_0,k)
\end{equation}
for all $\xi \leq t \leq \tau + 1$. 
However, by virtue of hypothesis \eqref{P2}, 
\begin{align}
\intR{ F_k(\vr, \vt, \vu) (\tau, \cdot) }
&\leq K + \intR{ F_k(\vr, \vt, \vu) (\tau+1, \cdot) } \br 
&\leq 
c(K,k, m_0) + \int_{\tau}^{\tau + 1} \intR{ F_k(\vr, \vt, \vu) },
\nonumber
\end{align}
and the proof of \eqref{P10} follows by the arguments used for showing \eqref{P11}.

Relations \eqref{P7}, \eqref{P10} yield the desired bound \eqref{P3}.

\end{proof}

Fix $K = 1$ in \eqref{P2}. As $F_k(\vr, \vt, \vu) \geq 0$, it follows there is a positive time $T$ such that 
\[
\sup_{t \in (T, T + 1)} \intR{ F_k (\vr, \vt, \vu ) (t, \cdot) } 
\leq L (k, m_0). 
\]
Now, it follows from Lemma \ref{LP1} that 
\[
\intR{ F_k (\vr, \vt, \vu ) (T+1, \cdot) } \leq  L (k, m_0), 
\]
and, by recursion,
\[
\intR{ F_k (\vr, \vt, \vu ) (T+n, \cdot) } \leq  L (k, m_0),\ n = 1,2,\dots.
\]
Consequently, applying the energy inequality \eqref{e6} on each interval 
$(T+n, T + n + 1)$, we deduce the existence of a universal constant $F_\infty(m_0, k)$ claimed in \eqref{P1a}. 

We have shown the following result. 

\begin{Proposition} [{\bf Bounded absorbing set}] \label{PP1}
Under the hypotheses of Theorem \ref{MT1}, there exists a constant $F_\infty(k, m_0)$ such that 
\begin{equation} \label{P20}
	\limsup_{t \to \infty} \intR{ F_k(\vr, \vt, \vu) } \leq F_\infty(k, m_0), 
\end{equation}		
where $F_k$ is the modified energy functional introduced in \eqref{P1}.	
	\end{Proposition}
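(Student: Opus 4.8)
The plan is to deduce the statement from Lemma~\ref{LP1} together with the sign $F_k \geq 0$ (this is Lemma~\ref{Le1}, the radiative block $a\vt^4 - \vt_\infty\frac{4a}{3}\vt^3 + \frac{a}{3}\vt_\infty^4$ being non-negative by \eqref{e2a}). Write $g(t) := \intR{F_k(\vr,\vt,\vu)(t,\cdot)} \geq 0$; by \eqref{e6}, $g$ is of bounded variation on compact subintervals of $(0,\infty)$ (it is a non-increasing function plus an absolutely continuous one), and all pointwise values below refer to this good representative. The first thing I would do is produce one time $T \geq 1$ at which the hypothesis \eqref{P2} of Lemma~\ref{LP1} holds with $K = 1$, i.e. $g(T) - g(T+1) \leq 1$. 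If no such $T$ existed, then $g(\tau) - g(\tau+1) > 1$ for every $\tau \geq 1$, and summing this inequality at $\tau = T_0, T_0+1, \dots, T_0+n-1$ for a fixed $T_0 \geq 1$ would force $g(T_0+n) < g(T_0) - n \to -\infty$, contradicting $g \geq 0$.

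Next I would apply Lemma~\ref{LP1} on the interval $(T,T+1)$ with $K = 1$, obtaining a constant $L(k,m_0) := L(1,k,m_0)$ such that $\sup_{t \in (T,T+1)} g(t) \leq L(k,m_0)$; in particular $g(T+1) \leq L(k,m_0)$. I would then propagate the bound $g(T+n) \leq L(k,m_0)$ to every integer $n \geq 1$ by induction, via the dichotomy: assuming $g(T+n) \leq L(k,m_0)$, either $g(T+n) - g(T+n+1) \leq 1$, in which case Lemma~\ref{LP1} applies on $(T+n, T+n+1)$ with $K = 1$ and gives $g(T+n+1) \leq \sup_{t \in (T+n, T+n+1)} g(t) \leq L(k,m_0)$; or $g(T+n) - g(T+n+1) > 1$, in which case $g$ has strictly decreased over the interval and $g(T+n+1) < g(T+n) \leq L(k,m_0)$. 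Either way the bound persists \emph{with the same constant} $L(1,k,m_0)$ — this dichotomy is precisely what keeps the recursion from running through the a priori growing family $L(1,k,m_0), L(L(1,k,m_0),k,m_0), \dots$.

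To conclude, I would fill in the gaps between consecutive integer times. For $t \geq T+1$, choose $n \geq 1$ with $t \in [T+n,T+n+1]$ and integrate \eqref{e6} over $[T+n,t]$, absorbing the source term $\vt_\infty P'(0) \|\Div\vu\|_{L^2(R^3)} m_0\sqrt{k}$ into the non-negative dissipation by Young's inequality and the coercivity \eqref{D1}, exactly as in the derivation of \eqref{P11}; since the dissipation integral over $[T+n,T+n+1]$ is itself controlled by $g(T+n) - g(T+n+1)$ plus that same source term, one gets $g(t) \leq g(T+n) + c(k,m_0) \leq L(k,m_0) + c(k,m_0) =: F_\infty(k,m_0)$, hence $\limsup_{t \to \infty} g(t) \leq F_\infty(k,m_0)$, which is \eqref{P1a}.

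The genuinely hard analytic content is entirely inside Lemma~\ref{LP1}, which I am allowed to assume: the pressure estimate got by testing the momentum balance with $\bfphi = \Grad\Del^{-1}[b(\vr)]$ for $b(\vr) = \min\{\vr, \vr^{1/\alpha}\}$, $\alpha > 3$, the attendant Sobolev and interpolation bookkeeping, and the closing of a self-improving inequality of the form $M \leq c\,(M^{3/5} + M^{1/2} + 1)$. Granting that lemma, the delicate points of the present argument are (i) that the final bound must be \emph{independent of the particular weak solution}, which forces the good time $T$ to be located through the universal sign $g \geq 0$ alone rather than through any solution-dependent quantity, and (ii) the dichotomy in the recursion, without which the constant would deteriorate at each step. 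One should also keep in mind that a bound valid for \emph{all} $t$ cannot be expected: the transient behaviour of $g$ on $(0,T)$ is not controlled by this reasoning, and only a $\limsup$ statement is obtained.
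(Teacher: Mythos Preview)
Your proof is correct and follows essentially the same route as the paper: locate one good time $T$ via non-negativity of $g$, apply Lemma~\ref{LP1} with $K=1$, propagate the bound $g(T+n)\leq L(1,k,m_0)$ by recursion, and fill the intermediate times using the energy inequality~\eqref{e6}. Your explicit dichotomy in the induction step is a welcome clarification of what the paper compresses into the phrase ``by recursion'', since it is precisely this alternative that prevents the constant from degrading along the sequence $L(1,k,m_0), L(L(1,k,m_0),k,m_0),\dots$.
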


\section{Long time behaviour, conclusion}
\label{L}

In view of \eqref{e9}, the proof of Theorem \ref{MT1} reduces to showing 
\begin{equation} \label{L1}
\frac{1}{T} \int_1^T \| \vr \|^{\frac{5}{3}}_{L^{\frac{5}{3}}(R)} \dt \to 0 
\ \mbox{as}\ T \to \infty.	
\end{equation}
Going back to the pressure estimates \eqref{p7} and using the uniform bounds 
\eqref{P10} we deduce
\[ 
\frac{1}{T} \int_1^T \intR{ \vr^{\frac{5}{3}} b(\vr) } \dt \to 0 
\ \mbox{as}\ T \to \infty.	
\]
As 
\[
\vr^{\frac{5}{3}} \leq \delta \vr + c(\delta) \vt^{\frac{5}{3}} b(\vr)
\]
for any $\delta > 0$, the convergence claimed in \eqref{L1} follows.

We have shown Theorem \ref{MT1}. 

\section{Concluding remarks}
\label{C} 

The result stated in Theorem \ref{MT1} is ``optimal'' in the sense that  
the ballistic energy appearing in \eqref{D2}, or, more specifically, the quantity 
\[
\intR{ \vr \log (\vr) (t, \cdot) }, 
\]
cannot be bounded below for $t \to \infty$. 
It follows from Theorem \ref{MT1} that there is a sequence of times 
$T_n \to \infty$ such that 
\begin{equation} \label{C1}
\vr (T_n, \cdot) \to 0 \ \mbox{in}\ L^{\frac{5}{3}}(R^3).
\end{equation}
We claim that 
\[
\intR{ \vr \log (\vr) (T_n, \cdot) } \to - \infty. 
\]
Indeed, as the total mass is a constant of motion, 
\[
m_0 = \intR{ \vr(T_n, \cdot) } = 
\intR{ \mathds{1}_{\vr \leq \delta } \vr (T_n, \cdot) } + \intR{\mathds{1}_{\vr > \delta } \vr (T_n, \cdot)}, 
\] 
where, in view of \eqref{C1}, 
\[
\intR{\mathds{1}_{\vr > \delta } \vr (T_n, \cdot)} \to 0 
\] 
for any $\delta > 0$. Finally, 
\[
\intR{ \mathds{1}_{\vr \leq \delta } \vr (T_n, \cdot) } \leq 
\frac{1}{|\log (\delta)|} \intR{ \vr |\log(\vr) | (T_n, \cdot) },\ 
|\log (\delta)| \to \infty \ \mbox{as}\ \delta \to 0.
\]


\begin{thebibliography}{10}

\bibitem{BATT}
Battaner, A., \textit{Astrophysical fluid dynamics}, Cambridge University Press, Cambridge, 1996

\bibitem{ChauFei}
Chaudhuri, N. and Feireisl, E., Navier-{S}tokes-{F}ourier system with {D}irichlet boundary
	conditions, \textit{Appl. Anal.}, {{\bf 101}} (12): {4076--4094}, 2022

\bibitem{DF1}
Ducomet, B. and Feireisl, E., A regularizing effect of radiation in the equations of fluid
  dynamics, \textit{Math. Meth. Appl. Sci.}, {\bf 28}: 661--685, 2005

\bibitem{FeNo6A} Feireisl, E. and Novotn\'y, A., \textit{Singular limits in thermodynamics of viscous fluids}, 	{Advances in Mathematical Fluid Mechanics}, Second edition,
Birkh\"auser/Springer, Cham, 2017

\bibitem{FeiNovOpen} Feireisl, E. and Novotn{\' y}, A., 
\textit{Mathematics of open fluid systems}, Birkh{\" a}user--Verlag, Basel, 2022

\bibitem{FP9} Feireisl, E. and Petzeltov{\'a}, H., Large-time behaviour of solutions to
  the {N}avier-{S}tokes equations of compressible flow, \textit{Arch. Rational Mech. Anal.}, {\bf 150}: 77--96, 1999

\bibitem{FeiLuSun} Feireisl, M. and Lu, Y. and Sun, Y., Unconditional stability of equilibria in thermally driven compressible fluids, 2024, {\bf arxiv preprint No. 2401.01494}

\bibitem{Gough} Gough, D., Towards understanding solar convection and activity,
\textit{Solar Physics}, {\bf 192}: 3--26, 2000

\bibitem{JeJiNo} Jessl{\'e}, D. and Jin, B. J. and Novotn{\'y}, A., Navier-{S}tokes-{F}ourier system on unbounded domains: weak solutions, relative entropies, weak-strong uniqueness,
   \textit{SIAM J. Math. Anal.}, {{\bf 45}}(3): 1907--1951, 2013
      
\bibitem{NovPok07} Novotn\'{y}, A. and Pokorn\'{y}, M., Stabilization to equilibria of compressible {N}avier-{S}tokes equations with infinite mass, \textit{Comput. Math. Appl.},
 {\bf{53}} (3-4): 437--451, 2007


\bibitem{OX} Oxenius, J., \textit{Kinetic theory of particles and photons}, Springer-Verlag, Berlin,
1986

\bibitem{Poul1} Poul, L., On dynamics of fluids in astrophysics,  \textit{J. Evol. Equ.}, {{\bf 9}}(1): 37--66, 2009

\bibitem{Roza} Rozanova, O., Blow-up of smooth highly decreasing at infinity solutions to
              the compressible {N}avier-{S}tokes equations,  \textit{J. Differential Equations}, {{\bf 245}}(7): 1762--1774, 2008
  
\bibitem{ThoChDa} Thompson, M.J. and Christensen-{D}alsgaard, J., \textit{Stellar atrophysical fluid dynamics}, Cambridge University Press, Cambridge, 2003

\bibitem{XIN} Xin, Z., Blowup of smooth solutions to the compressible {N}avier-{S}tokes
        equation with compact density, \textit{Commun. Pure Appl. Math.}, {\bf 51}: 229--240, 1998


\end{thebibliography}
\end{document}